\theoremstyle{plain}
\newtheorem{thm}{Theorem}[section]
\newtheorem*{thm*}{Theorem}
\newtheorem*{cor*}{Corollary}
\newtheorem{prop}[thm]{Proposition}
\newtheorem{lem}[thm]{Lemma}
\newtheorem{cor}[thm]{Corollary}
\newtheorem{claim}{Claim}
\newtheorem*{claim*}{Claim}
\theoremstyle{definition}
\newtheorem{defn}[thm]{Definition}
\newtheorem{ex}[thm]{Example}
\newtheorem{fact}[thm]{Fact}
\theoremstyle{remark}
\numberwithin{equation}{thm}
\def\Hom{\mathrm{Hom}}
\def\Proj{\operatorname{Proj}}
\def\Coker{\mathrm{Coker}}
\def\Im{\mathrm{Im}}
\def\a{\mathfrak a}
\def\b{\mathfrak b}
\def\e{\mathrm{e}}
\def\g{\mathrm{g}}
\def\m{\mathfrak m}
\def\n{\mathfrak n}
\def\p{\mathfrak p}
\def\q{\mathfrak q}
\def\Z{\Bbb Z}
\def\H{\mathrm{H}}
\newcommand{\rme}{\mathrm{e}}
\newcommand{\rmE}{\mathrm{E}}
\newcommand{\rmH}{\mathrm{H}}
\newcommand{\rmR}{\mathrm{R}}
\newcommand{\rmT}{\mathrm{T}}
\newcommand{\rmU}{\mathrm{U}}
\newcommand{\calF}{\mathcal{F}}
\newcommand{\fkm}{\mathfrak{m}}
\newcommand{\fkn}{\mathfrak{n}}
\newcommand{\fkq}{\mathfrak{q}}
\def\depth{\mathrm{depth}}
\def\Ass{\mathrm{Ass}}
\def\Assh{\mathrm{Assh}}
\def\Spec{\mathrm{Spec}}
\def\hdeg{\operatorname{hdeg}}
\def\gr{\mbox{\rm gr}}
\begin{document}

\setlength{\baselineskip}{14pt}
%%%%%%%%%%%%%%%%%%%%%%%%%%%%%%%%%%%%%%%%%%%%%%%%%%%%%%%%%%%%%
\title{The second Hilbert coefficients and\\ the homological torsions of parameters}
\pagestyle{plain}
\author{Shiro Goto}
\address{Department of Mathematics, School of Science and Technology, Meiji University, 1-1-1 Higashi-mita, Tama-ku, Kawasaki 214-8571, Japan}
\email{goto@math.meiji.ac.jp}
\author{Kazuho Ozeki}
\address{Department of Mathematical Science, Faculty of Science, Yamaguchi University, 1677-1 Yoshida, Yamaguchi 753-8512, Japan}
\email{ozeki@yamaguchi-u.ac.jp}

\thanks{{\it Key words and phrases:}
Hilbert function, second Hilbert coefficient, homological degree, homological torsion
\endgraf
{AMS 2010 {\em Mathematics Subject Classification:}
13D40, 13H15,  13H10.}}

\maketitle
%%%%%%%%%%%%%%%%%%%%%%%%%%%%%%%%%%%%%%%%%%%%%%%%%%%%%%%%%%%%%
%%%%%%%%%%%%%%%%%%%%%%%%%%%%%%%%%%%%%%%%%%%%%%%%%%%%%%%%%%%%%
%\begin{abstract}
%Let $M$ be a finitely generated module over a Noetherian local ring.
%This paper gives, for a given parameter ideal $Q$ for $M$, bounds for the second Hilbert coefficients $\e_Q^2(M)$ in terms of the homological degrees and torsions of modules.
%We also report a criterion for a certain equality of the second Hilbert coefficients of parameters and the homological torsions of modules.

%\end{abstract}

{\footnotesize
%\tableofcontents
}
%%%%%%%%%%%%%%%%%%%%%%%%%%%%%%%%%%%%%%%%%%%%%%%%%%%%%%%%%%%%%
%%%%%%%%%%%%%%%%%%%%%%%%%%%%%%%%%%%%%%%%%%%%%%%%%%%%%%%%%%%%%
%%%%%%%%%%%%%%%%%%%%%%%%%%%%%%%%%%%%%%%%%%%%%%%%%%%%%%%%%%%%%
%%%%%%%%%%%%
%%%%%%%%%%%%%%%%%%%%%%%%%%%%%%%%%%%%%%%%%%%%%%%%%%%%%%%%%%%%%
%%%%%%%%%%%%%%%%%%%%%%%%%%%%%%%%%%%%%%%%%%%%%%%%%%%%%%%%%%%%%

\section{Introduction}

The purpose of our paper is to study the second Hilbert coefficients of parameters in terms of the homological degrees and torsions of modules.

To state the problems and the results of our paper, first of all, let us fix some of our notation. Let $A$ be a Noetherian local ring with  maximal ideal $\fkm$ and $d = \dim A > 0$. Let $M$ be a finitely generated $A$-module with $s = \dim_AM$. For simplicity, throughout this paper, we assume that $A$ is $\m$--adically complete and the residue class field $A/\fkm$ of $A$  is infinite. 

For each $j \in \Z$ we set
$$
M_j  %[\rmH_\fkm^j(M)]^\vee 
= \Hom_A(\rmH_\fkm^j(M), E),
$$
where $E=\rmE_A(A/\fkm)$ denotes the injective envelope of $A/\fkm$ and $\H_{\m}^j(M)$ the $j$th local cohomology module of $M$ with respect to the maximal ideal $\m$.
Then $M_j$ is a finitely generated $A$-module with $\dim_A M_j \le j$ for all $j \in \Z$ ([GO2, Fact 2.1]).  Let $I$ be a fixed $\fkm$-primary ideal in $A$ and let $\ell_A(N)$ denote, for an $A$-module $N$, the length of $N$.
Then there exist integers $\{\e_I^i(M)\}_{0 \leq i \leq s}$ such that
$$\ell_A(M/I^{n+1}M)=\e_I^0(M)\binom{n+s}{s}-\e_I^1(M)\binom{n+s-1}{s-1}+\cdots+(-1)^s\e_I^s(M)$$
for all $n \gg 0$. We call $\e_I^i(M)$ the $i$-th Hilbert coefficient of $M$ with respect to $I$ and especially call the leading coefficient $\e_I^0(M)~(>0)$ the multiplicity of $M$ with respect to $I$.

The homological degree $\hdeg_I(M)$ of $M$ with respect to $I$ is inductively defined in the following way, according to the dimension $s = \dim_A M$ of $M$.

\begin{defn}\label{defn1}$($\cite{V2}$)$ For each finitely generated $A$-module $M$ with $s = \dim_A M$, we set 
$$
\hdeg_I(M) = \left\{
\begin{array}{lc}
\ell_A(M) & \mbox{if $s \le 0$},\\
\vspace{1mm}\\
\rme_I^0(M) + \sum_{j=0}^{s-1} \binom{s-1}{j}\hdeg_I(M_j) & \mbox{if $s > 0$}
\end{array}
\right.
$$
and call it the homological degree of $M$ with respect to $I$.
\end{defn}

Then the homological torsions of modules is defined as follows.

\begin{defn}\label{defn2}
Let $M$ be a finitely generated $A$--module with $s= \dim_A M \ge 2$. We set $$\rmT_I^i(M) = \sum_{j=1}^{s-i}\binom{s-i-1}{j-1} \hdeg_I(M_j)$$
for each $1 \leq i \leq s-1$ and call them the homological torsions of $M$ with respect to $I$.
\end{defn}

Notice that the homological degrees $\hdeg_I(M)$ and torsions $\rmT_I^i(M)$ of $M$ with respect to $I$ depend only on the integral closure of $I$.

In \cite{GhGHOPV2}, it was proved that for parameter ideals $Q$ for $M$, a lower bound
$$ \e_Q^1(M) \geq - \rmT_Q^1(M) $$
of the first Hilbert coefficient $\e_Q^1(M)$ in terms of the homological torsion $\rmT_Q^1(M)$.
Recently, in \cite[Theorem 1.4]{GO2}, the authors showed that the equality $\e_Q^1(M)=-\rmT_Q^1(M)$ holds true if and only if the equality $\chi_1(Q;M)=\hdeg_Q(M)-\e_Q^0(M)$ holds true for parameter ideals $Q$ for an unmixed module $M$, where $\chi_1(Q;M)=\ell_A(M/QM)-\e_Q^0(M)$ denotes the first Euler characteristic of $M$ relative to $Q$.
Here we notice that the inequality $\e_Q^1(M) \leq 0$ holds true for every parameter ideals $Q$ of $M$ (\cite[Theorem 3.5]{MSV}) and that $M$ is a Cohen-Macaulay $A$-module once $\e_Q^1(M)=0$ for some parameter ideal $Q$, provided $M$ is unmixed (see \cite{GhGHOPV1, GhGHOPV2}). 
Thus the behavior of the first Hilbert coefficients $\e_Q^1(M)$ for parameter ideals $Q$ for $M$ are rather satisfactory understood.

The purpose of this paper is to study the natural question of how about the second Hilbert coefficients $\e_Q^2(M)$ of $M$ with respect to $Q$.
For the estimation of $\e_Q^2(M)$ the key is the inequalities
$$ -\ell_A(\H_{\m}^1(M)) \leq \e_Q^2(M) \leq 0 $$
of parameters $Q$ in the case where $d=\dim_AM=2$ and $\depth_A M >0$ (Proposition 3.1).
We will also show that $\e_Q^2(M)=0$ if and only if the ideal $Q$ is generated by a system $a_1,a_2$ of parameters which forms a $d$-sequence on $M$.
Then the first main result of this paper answers the question and is stated as follows. Recall that $M$ is said to be unmixed, if $\dim A/\p=\dim_A M$ for all $\p \in \Ass_AM$ (since $A$ is assumed to be $\m$-adically complete).

\begin{thm}\label{thm1}
Let $M$ be a finitely generated $A$-module with $d =\dim_AM \ge 3$.
Then we have the following:
\begin{itemize}
\item[(1)] we have
$$\rme_Q^2(M) \leq \rmT_Q^2(M)$$
for every parameter ideals $Q$ of $A$.
\item[(2)] We have $$-\sum_{j=2}^{d-1} \binom{d-3}{j-2} \hdeg_Q(M_j) \le \rme_Q^2(M)$$ for every parameter ideals $Q$ of $A$, if $M$ is unmixed.
\end{itemize}
\end{thm}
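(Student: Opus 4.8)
The natural strategy is induction on $d = \dim_A M$, reducing modulo a generic element of $Q$ and relating the Hilbert coefficients, homological degrees, and homological torsions of $M$ to those of $M/aM$. The key reduction tool is the behavior of $\rme_Q^i$, $\hdeg_Q$, and $\rmT_Q^i$ under passing to a hyperplane section; one expects that for a sufficiently general element $a \in Q \setminus \fkm Q$ one has $(M/aM)_j$ closely related to the $M_j$ via the long exact sequence of local cohomology associated to $0 \to (0:_M a) \to M \to M \to M/aM \to 0$, together with the known formula $\rme_Q^i(M/aM) = \rme_Q^i(M)$ for $0 \le i \le d-1$ when $a$ is a superficial element (up to correction terms supported in low dimension coming from $(0:_M a)$, which is annihilated by a power of $\fkm$ in the relevant cases). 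First I would establish, using Definition \ref{defn2} and the combinatorial identity $\binom{s-i-1}{j-1} = \binom{s-i-2}{j-1} + \binom{s-i-2}{j-2}$, a recursion expressing $\rmT_Q^i(M)$ in terms of $\rmT_Q^i(M/aM)$ and $\hdeg_Q(M_1)$-type terms; the analogous recursion for $\hdeg_Q$ is already implicit in Definition \ref{defn1}.

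For part (1), the base case is $d = 3$, where the claim reads $\rme_Q^2(M) \le \hdeg_Q(M_1)$ (since $\rmT_Q^2(M) = \binom{0}{0}\hdeg_Q(M_1) = \hdeg_Q(M_1)$), and this should follow from Proposition 3.1 applied after killing one generic parameter: cutting by a superficial $a$ gives $\dim M/aM = 2$ with $\rme_Q^2(M/aM) = \rme_Q^2(M)$ (modulo lower-order corrections), and $\rme_Q^2(M/aM) \le 0 \le$ well, more precisely one uses $-\ell_A(\H_\fkm^1(M/aM)) \le \rme_Q^2(M/aM)$ from the other direction and $\rme_Q^2(M/aM) \le 0$; but for the upper bound one needs $\ell_A(\H_\fkm^1(M/aM))$ controlled by $\hdeg_Q(M_1)$, which is exactly the content of how $\hdeg$ accounts for the torsion. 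The inductive step then writes $\rme_Q^2(M) \le \rme_Q^2(M/aM) + (\text{correction}) \le \rmT_Q^2(M/aM) + (\text{correction}) = \rmT_Q^2(M)$, where the last equality is the combinatorial recursion above and the correction term is a nonnegative multiple of $\hdeg_Q(M_1)$. Note part (1) requires no unmixedness hypothesis, so the corrections coming from $(0:_M a)$ must be handled honestly rather than assumed to vanish.

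For part (2), where $M$ is unmixed, the point is that unmixedness lets us choose $a$ so that $M/aM$ is again unmixed (or at least so that the defect is controlled), and $(0:_M a) = 0$, so the hyperplane-section formulas become exact: $\rme_Q^i(M/aM) = \rme_Q^i(M)$ and $(M/aM)_j \cong M_{j+1}$ in the relevant range (shifting the index by one, since cutting by a regular element shifts local cohomology). The base case $d = 3$ gives $\rme_Q^2(M) \ge -\binom{0}{0}\hdeg_Q(M_2)$, i.e. $\rme_Q^2(M) \ge -\hdeg_Q(M_2)$, which should come from the lower bound $-\ell_A(\H_\fkm^1(M/aM)) \le \rme_Q^2(M/aM)$ in Proposition 3.1 together with $\ell_A(\H_\fkm^1(M/aM)) = \ell_A((M/aM)_1) \le \hdeg_Q((M/aM)_1) = \hdeg_Q(M_2)$. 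The induction then uses the recursion on both $\rme_Q^2$ and the sum $\sum_{j=2}^{d-1}\binom{d-3}{j-2}\hdeg_Q(M_j)$, matching them term by term via Pascal's identity.

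The main obstacle will be the precise control of the correction terms in part (1) — that is, tracking how $(0:_M a)$ (equivalently $\H_\fkm^0(M)$ and its interaction with $a$) contributes to the difference $\rme_Q^2(M) - \rme_Q^2(M/aM)$ and to the difference in homological degrees, and checking that these contributions have the right sign and are bounded by the $\hdeg_Q(M_j)$ terms that $\rmT_Q^2$ provides. A secondary technical point is verifying that a single general element $a \in Q$ can be chosen superficial for $M$, for all the modules $M_j$ simultaneously, and compatible with the unmixedness reduction in part (2); this is where the standing assumption that $A/\fkm$ is infinite is used.
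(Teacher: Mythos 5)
Your overall skeleton (induction on $d$ via a general superficial element, with Pascal's identity converting two-term estimates into the binomial bounds) is the same as the paper's, but several load-bearing steps are wrong or missing. For part (1), the ``correction terms from $(0:_Ma)$'' that you single out as the main obstacle are simply avoided in the paper: one first passes to $M/\H_{\m}^0(M)$, which changes neither $\e_Q^2$ nor $\rmT_Q^2$ (the module $M_0$ does not enter $\rmT_Q^2$), so that $a$ becomes $M$-regular. More seriously, your base case $d=3$ is off: $\e_Q^2(M/aM)\le 0$ is not available because $\depth_A(M/aM)$ is typically $0$ (Proposition 3.1 requires positive depth), and the quantity to control for the upper bound is $\ell_A(\H_{\m}^0(M/aM))$, not $\ell_A(\H_{\m}^1(M/aM))$. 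The paper instead uses Corollary 3.2, $\e_Q^2(\overline{M})\le \ell_A(\H_{\m}^0(\overline{M}))$, together with the isomorphism $\overline{M}_0\cong M_1/aM_1$ and a choice of $a$ with $\hdeg_Q(M_1/aM_1)\le\hdeg_Q(M_1)$; the inductive step then needs only the inequality $\rmT_Q^2(\overline{M})\le\rmT_Q^2(M)$ for a general superficial $a$ (Lemma 2.6), not the equality ``$\rmT_Q^2(\overline{M})+\mathrm{correction}=\rmT_Q^2(M)$'' you assert.

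For part (2) there are two genuine gaps. First, $M/aM$ need not be unmixed, and $(0:_Ma)=0$ does not give $(M/aM)_j\cong M_{j+1}$: cutting by a regular element does not shift local cohomology. One only has exact sequences $0\to M_{j+1}/aM_{j+1}\to \overline{M}_j\to [(0):_{M_j}a]\to 0$, and it is the resulting estimate $\hdeg_Q(\overline{M}_j)\le\hdeg_Q(M_j)+\hdeg_Q(M_{j+1})$ combined with Pascal's identity that produces the coefficients $\binom{d-3}{j-2}$; with your shift isomorphism no Pascal identity would be needed at all, which signals the inconsistency. Second, the mechanism that keeps the induction inside the unmixed class is missing: the paper invokes Proposition 3.4 to embed $M$ into a finitely generated free module over a Gorenstein ring, $0\to M\to F\to X\to 0$, so that $\H_{\m}^0(\overline{M})\cong[(0):_Xa]$ has finite length and $L=\Im(\overline{\varphi})$ is unmixed of dimension $d-1$ with $\e_Q^2(L)=\e_Q^2(M)$ and $L_j\cong\overline{M}_j$ for $j\ge 1$; the induction is then run on $L$. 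This embedding, together with Corollary 3.5 (finite generation of $\H_{\m}^1$ of an unmixed module), is also what legitimizes the length count in the base case $d=3$, where $\ell_A(\H_{\m}^0(\overline{M}))-\ell_A(\H_{\m}^1(\overline{M}))=-\ell_A(M_2/aM_2)$ after the $[(0):_{M_1}a]$ contributions cancel. Without these inputs your argument for (2) does not go through as written.
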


Therefore we get the finiteness of the set
$$\Lambda_Q^2(M)=\{\e_{\q}^2(M) \ | \ \mbox{$\fkq$ is a parameter ideal of $M$ such that $\overline{\fkq} = \overline{Q}$}  \}$$
for parameters of the unmixed module $M$ with $\dim_A M \geq 2$ (Corollary 3.6), where, for an ideal $J$ in $A$, $\overline{J}$ denotes the integral closure of $J$.
We note here that, unless $M$ is unmixed, the inequality (2) in Theorem 1.3 does not hold true in general (Example 3.7).

Thus the second Hilbert coefficients $\e_Q^2(M)$ bounded by above in terms of the homological torsions $\rmT_Q^2(M)$.
It seems now natural to ask what happens on the parameters $Q$ of $M$ once the equality $ \e_Q^2(M) = \rmT_Q^2(M) $ is attained.
Let $\g_s(Q;M)=\ell_A(M/QM)-\e_Q^0(M)+\e_Q^1(M)$ denotes the sectional genera of $M$ with respect to $Q$.
We notice here that, in \cite{GO1}, we explored the relationship between the sectional genera and the homological degrees of parameters, and gave a criterion for the equality $\g_s(Q;M)=\hdeg_Q(M)-\e_Q^0(M)-\rmT_Q^1(M)$.

Then the second main result of this paper answers the question and is stated as follows (Theorem 4.1), where the sequence $a_1,a_2,\ldots,a_d$ is said to be  a $d$-sequence on $M$, if the equality $$[(a_1,a_2,\ldots,a_{i-1})M:_M a_ia_j]=[(a_1,a_2,\ldots,a_{i-1})M:_M a_j]$$ holds true for all $1 \leq i \leq j \leq d$ (\cite{H}).

\begin{thm}\label{thm2}
Let $M$ be a finitely generated $A$-module with $d = \dim_AM\geq 3$ and suppose that $M$ is unmixed. 
Let $Q$ be a parameter ideal of $A$. 
Then the following conditions are equivalent:
\begin{itemize}
\item[$(1)$] $\g_s(Q;M)=\hdeg_Q(M)-\e_Q^0(M)-\rmT_Q^1(M)$,
\item[$(2)$] $\e_Q^2(M)=\rmT_Q^2(M)$.
\end{itemize}
When this is the case, we have the following$\mathrm{:}$
\begin{itemize}
\item[$(\mathrm{i})$] $(-1)^i\e_Q^i(M)=\rmT_Q^i(M)$ for $3 \leq i\leq d-1$ and $\e_Q^d(M)=0$,
\item[$(\mathrm{ii})$] $\ell_A(M/Q^{n+1}M)=\sum_{i=0}^d(-1)^i\e_{Q}^i(M)\binom{n+d-i}{d-i}$ for all $n \geq 0$,
\item[$(\mathrm{iii})$] there exist elements $a_1,a_2,\ldots,a_d \in A$ such that $Q=(a_1,a_2,\ldots,a_d)$ and $a_1,a_2,\ldots,a_d$ forms a $d$-sequence on $M$, and
\item[$(\mathrm{iv})$] $Q\H_{\m}^i(M)=(0)$ for all $1 \leq i \leq d-3$.
\end{itemize}
\end{thm}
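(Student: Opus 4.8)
The plan is to prove the equivalence $(1)\Leftrightarrow(2)$ by relating both sides to a single quantity that measures the failure of $M$ to be "nice" with respect to $Q$, and then to reduce the dimension. First I would fix a superficial element $a \in Q$ for $M$ that is also a nonzerodivisor on $M$ (possible since $M$ is unmixed of positive depth, after a minor reduction; the infinite residue field lets us choose $a$ generic) and pass to $\overline{M} = M/aM$ with the parameter ideal $\overline{Q} = Q/(a)$ in $\overline{A} = A/(a)$, which has dimension $d-1 \ge 2$. The key numerical facts I would assemble are: the standard relations between the Hilbert coefficients $\e_Q^i(M)$ and $\e_{\overline{Q}}^i(\overline{M})$ coming from the exact sequence $0 \to M \xrightarrow{a} M \to \overline{M} \to 0$ and the behaviour of $\H_\m^\bullet$; the behaviour of $\hdeg_Q$ under such a hyperplane section (this is exactly the content of the "homological degree" machinery of \cite{V2}, where $\hdeg_Q(M) \ge \hdeg_{\overline{Q}}(\overline{M})$ with equality governed by $\depth$ considerations); and the corresponding behaviour of $\rmT_Q^i$. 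I would also use the identity $\g_s(Q;M) - (\hdeg_Q(M) - \e_Q^0(M) - \rmT_Q^1(M))$ and rewrite it, via $\ell_A(M/QM) = \e_Q^0(M) + \chi_1(Q;M)$ and the already-cited result \cite[Theorem 1.4]{GO2} characterizing $\e_Q^1(M) = -\rmT_Q^1(M)$ by $\chi_1(Q;M) = \hdeg_Q(M) - \e_Q^0(M)$, so that condition $(1)$ becomes a clean statement about $\e_Q^1(M) + \e_Q^2(M)$ versus $\rmT_Q^1(M) + \rmT_Q^2(M)$ (using $\g_s = \chi_1 + \e_Q^1$).

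The core of the argument is then a careful bookkeeping of three chains of inequalities — for $\e_Q^{\le 2}$, for $\hdeg_Q$, and for $\rmT_Q^{\le 2}$ — under the hyperplane section, arranged so that condition $(1)$ forces each of them to be an equality simultaneously. Concretely: Theorem \ref{thm1}(1) already gives $\e_Q^2(M) \le \rmT_Q^2(M)$, so $(1)\Rightarrow(2)$ should follow once I show that $(1)$ forces $\e_Q^1(M) = -\rmT_Q^1(M)$ as well (via \cite[Theorem 1.4]{GO2} and the nonnegativity/monotonicity built into the $\hdeg$ recursion), and conversely that these two equalities together reconstitute $(1)$ by the same identity. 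To get $(2)\Rightarrow(1)$ I would run the induction: reducing modulo the generic $a$ turns $\e_Q^2(M)=\rmT_Q^2(M)$ into the analogue of condition $(2)$ in dimension $d-1$ (matching $\e_{\overline{Q}}^2(\overline{M})$ with $\rmT_{\overline{Q}}^2(\overline{M})$, which I must verify survives the section), invoke the inductive hypothesis / the $d=2$ base case (Proposition 3.1 and the $d$-sequence characterization of $\e_Q^2 = 0$ stated there), and then lift the conclusion back up.

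For the "when this is the case" list, items (i) and (ii) should come out of the same induction: once all the intermediate inequalities collapse, the Hilbert polynomial of $M$ relative to $Q$ has the prescribed shape $\sum (-1)^i \e_Q^i(M)\binom{n+d-i}{d-i}$ valid for all $n \ge 0$ (not just $n \gg 0$) — this is the hallmark of the extremal case, and it forces $\e_Q^d(M) = 0$ and the alternating-sign identities $(-1)^i\e_Q^i(M) = \rmT_Q^i(M)$ for $i \ge 3$ by comparing with the $\hdeg$ recursion applied to the $M_j$'s. Item (iii) — the existence of a $d$-sequence generating $Q$ — I would extract from (ii): the stable polynomial form from $n=0$ on, combined with the unmixedness, is exactly the Buchsbaum–Rim / Goto–Nakamura type criterion that detects a $d$-sequence of parameters (one shows $\ell_A(M/Q^{n+1}M)$ stable from the start $\Leftrightarrow$ $Q$ is generated by a $d$-sequence on $M$), choosing the $a_i$ generically so that all the superficial/$d$-sequence conditions hold at once. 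Item (iv), $Q\H_\m^i(M) = 0$ for $1 \le i \le d-3$, should follow from (iii) by a standard local-cohomology computation for $d$-sequences (the $d$-sequence property propagates annihilation of the intermediate local cohomologies), or alternatively from tracking which $M_j$ must vanish or have small dimension once the torsion terms $\rmT_Q^i$ are pinned down. The main obstacle I anticipate is the hyperplane-section step: controlling simultaneously how $\e_Q^2$, the homological degree, and the torsions $\rmT_Q^i$ change when passing to $M/aM$ — in particular ensuring that the generic choice of $a$ makes all the relevant inequalities tight and that unmixedness is preserved well enough (it typically is not preserved literally, so I would work with the unmixed part or with the vanishing of $\H_\m^0$ after the section) to keep the induction running; the bookkeeping of binomial coefficients in $\rmT_Q^i$ under dimension drop is delicate but routine once the structural statement is set up.
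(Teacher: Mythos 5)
There is a genuine gap at precisely the step you wave at as ``lift the conclusion back up.'' After cutting by a superficial element $a$, the module $\overline{M}=M/aM$ is no longer unmixed, so the inductive hypothesis can only be applied to the unmixed quotient $L=\overline{M}/\H_{\m}^0(\overline{M})$. The inductive conclusion $\g_s(\overline{Q};L)=\hdeg_Q(L)-\e_Q^0(L)-\rmT_Q^1(L)$ does \emph{not} by itself give the corresponding equality for $\overline{M}$ (and hence for $M$ via $\g_s(Q;M)=\g_s(\overline{Q};\overline{M})$): by the paper's Lemma 4.4 one needs, in addition, the nontrivial vanishing $Q\overline{M}\cap\H_{\m}^0(\overline{M})=(0)$, and establishing this is the heart of the paper's argument. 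It is obtained there by first replacing $A$ by a Gorenstein ring and embedding $M$ into a finite free module $F$ (Proposition 3.4), then choosing a \emph{second} superficial element $b$ for which the same collapse of inequalities forces $b\H_{\m}^1(M)=(0)$ and, via the induction run in the $b$-direction and Proposition 4.5, a $d$-sequence $a,a_3,\dots,a_d$ on $M/\rmU(bM)$; Proposition 4.6 then proves $Q\overline{M}\cap\H_{\m}^0(\overline{M})=(0)$ through a delicate colon-ideal claim ($[a^{\ell+1}M:_Mb^2]=[a^{\ell+1}M:_Mb]$) that uses the free embedding and the annihilation of $\H_{\m}^1(M)$. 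None of this is present or foreseen in your proposal, and without it the induction does not close.

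Two secondary problems. For $(1)\Rightarrow(2)$ the paper simply invokes the earlier characterization \cite[Theorem 1.3]{GO1} (Theorem 4.2), which directly yields $\e_Q^2(M)=\rmT_Q^2(M)$ together with all of (i)--(iv); your alternative route --- arguing that (1) forces $\e_Q^1(M)=-\rmT_Q^1(M)$ by combining $\g_s=\chi_1+\e_Q^1$ with \cite[Theorem 1.4]{GO2} --- does not work as stated, because the two deviations $\chi_1(Q;M)-(\hdeg_Q(M)-\e_Q^0(M))\le 0$ and $\e_Q^1(M)+\rmT_Q^1(M)\ge 0$ have opposite signs, so their sum being zero does not force each to vanish. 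Likewise, your plan to deduce (iii) from (ii) via an equivalence ``Hilbert function polynomial from $n=0$ on $\Leftrightarrow$ $Q$ generated by a $d$-sequence on $M$'' asserts a converse that is not a standard fact and is not how the paper proceeds (the $d$-sequence in (iii), like (i), (ii), (iv), comes packaged with Theorem 4.2).
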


We now briefly explain how this paper is organized.
In Section 2 we will summarize,  for the later use in this paper, some auxiliary results on the homological degrees and torsions.
We shall prove Theorem 1.3 in Section 3 (Theorem 3.3).
Theorem 1.4 will be proven in Section 4 (Theorem 4.1).
Unless $M$ is unmixed, the implication $(2) \Rightarrow (1)$ in Theorem 1.4 does not hold true in general. 
We will show in Section 4 an example of  parameter ideals $Q$ in a three-dimensional mixed local ring $A$ such that $\e_Q^1(A)=-\rmT_Q^1(A)$ but $\chi_1(Q;A) < \hdeg_Q(A)-\e_Q^0(A)$.

In what follows, unless otherwise specified, let $A$ be a Noetherian local ring with maximal ideal $\m$ and $d = \dim A>0$.
Let $M$ be a finitely generated $A$-module with $s=\dim_AM$.
We throughout assume that $A$ is $\m$--adically complete and the field $A/\m$ is infinite.
For each $\m$-primary ideal $I$ in $A$ we set
$$ R=\rmR(I)=A[It], \ \ R'=\rmR'(I)=A[It,t^{-1}], \ \ \mbox{and} \ \ \gr_I(A)=\rmR'(I)/t^{-1}\rmR'(I),$$
where $t$ is an indeterminate over $A$.
%Let $\mathcal{M}=\m R+R_+$ be the unique graded maximal ideal in $R=\rmR(I)$.

%%%%%%%%%%%%%%%%%%%%%%%%%%%%%%%%%%%%%%%%%%%%%%%%%%%%%%%%%%%%%%%%%%%%%%%%%%%%%%%%%%%%%%%%%%%%%%%%%%%%%%%%%%%%%%%%%%%%%%%%%%%%%%%%%%%%%%%%%%%%%%%%%%%%%%%%%%%%%%%%%%%%%%%%%%%%%%%%%%%%%%%%%%%%%%%%%%%%%%%%%%%%%%%%%%%%%%%%%%%%%%%%%%%%%%%%%%%%%%%%%%%%%%%%%%%%%%%%%%%%%%%%%%%%%%%%%%%%%%%%%%%%%%%%%%%%%%%%%%%%%%%%%%%%%%%%%%%%%%%%%%%%%%%%%%%%%%%%%%%%%%%%%%%%%%%%%%%%%%%%%%%%%%%%%%%%%%%%%%%%%%%%%%%%%%%%%%%%%%%%

\section{Preliminaries}

In this section we summarize some basic properties of homological degrees and torsions of modules, which we need throughout this paper.
See \cite{GO2} for the detailed proofs.

For each $j \in \Z$ we set
$$
M_j  %[\rmH_\fkm^j(M)]^\vee 
= \Hom_A(\rmH_\fkm^j(M), E),
$$
where $E=\rmE_A(A/\fkm)$ denotes the injective envelope of $A/\fkm$ and $\H_{\m}^j(M)$ the $j$th local cohomology module of $M$ with respect to $\m$.
Then, for each $j \in \Z$, $M_j$ is a finitely generated $A$-module with $\dim_A M_j \le j$, where $\dim_A (0) = -\infty$ (\cite[Fact 2.1]{GO2}).

We recall the definition of homological degrees.

\begin{defn}\label{hdeg}$($\cite{V2}$)$
For each finitely generated $A$-module $M$ with $s = \dim_A M$ and for each $\m$-primary ideal $I$ of $A$, we set 
$$
\hdeg_I(M) = \left\{
\begin{array}{lc}
\ell_A(M) & \mbox{if $s \le 0$},\\
\vspace{1mm}\\
\rme_I^0(M) + \sum_{j=0}^{s-1} \binom{s-1}{j}\hdeg_I(M_j) & \mbox{if $s > 0$}
\end{array}
\right.
$$
and call it the homological degree of $M$ with respect to $I$.
\end{defn}

Let us summarize some basic properties of $\hdeg_I(M)$.

\begin{fact}\label{fact2}
Let $M$ and $M'$ are finitely generated $A$-modules. Let $I$ be an $\m$-primary ideal in $A$.
Then $0 \le \hdeg_I(M) \in \Z$. We furthermore have the following:
\begin{itemize}
\item[(1)] $\hdeg_I(M) = 0$ if and only if $M=(0)$.
\item[(2)] If $M \cong M'$, then $\hdeg_I(M) = \hdeg_I(M')$.
\item[(3)] $\hdeg_I(M)$ depends only on the integral closure of $I$.
\item[(4)] If $M$ is a generalized Cohen-Macaulay $A$-module, then
$$\hdeg_I(M) - \rme_I^0(M) = \mathbb{I}(M)$$
and
$$ \ell_A(M/QM)-\e_Q^0(M) \leq \mathbb{I}(M)$$
for all parameter ideals $Q$ for $M$ $($\cite{STC}$)$, where $\mathbb{I}(M) = \sum_{j=0}^{s-1}\binom{s-1}{j} \ell_A(\H_{\m}^j(M))$ denotes the St\"{u}ckrad-Vogel invariant of $M$.
\end{itemize}
\end{fact}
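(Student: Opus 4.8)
The plan is to establish the integrality and nonnegativity together with (1)--(3) by a single induction on $s = \dim_A M$, exploiting the recursive shape of Definition \ref{hdeg}: the modules $M_j$ occurring on the right-hand side all satisfy $\dim_A M_j \le j \le s-1 < s$ by \cite[Fact 2.1]{GO2}, so the induction hypothesis applies to each of them. For the base case $s \le 0$ the module $M$ has finite length and $\hdeg_I(M) = \ell_A(M)$ is a nonnegative integer. For $s > 0$ the multiplicity $\e_I^0(M)$ is a positive integer and each $\hdeg_I(M_j)$ is a nonnegative integer by the induction hypothesis, so the defining sum is a nonnegative integer; this proves $0 \le \hdeg_I(M) \in \Z$. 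The same dichotomy yields (1): if $M = 0$ then $\dim_A M = -\infty$ and $\hdeg_I(M) = \ell_A(0) = 0$, while if $M \ne 0$ then either $s = 0$ and $\hdeg_I(M) = \ell_A(M) > 0$, or $s > 0$ and $\hdeg_I(M) \ge \e_I^0(M) > 0$ since the remaining summands are nonnegative.

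For (2), I would note that an isomorphism $M \cong M'$ induces isomorphisms $\H_\m^j(M) \cong \H_\m^j(M')$, hence $M_j \cong M'_j$ for every $j$, and also preserves $\dim_A$, $\ell_A(-)$ and $\e_I^0(-)$; comparing the defining expressions term by term and invoking the induction hypothesis on each $M_j$ gives $\hdeg_I(M) = \hdeg_I(M')$. For (3), the key point is that the modules $M_j$ are built solely from the local cohomology of $M$ with respect to $\m$ and so do not depend on $I$ at all; the only place where $I$ enters the definition is through $\e_I^0(M)$, which depends only on the integral closure $\overline{I}$ of $I$ (a standard property of multiplicities). Running the induction, whose base case $s \le 0$ is a length and hence independent of $I$, therefore shows that $\hdeg_I(M)$ is unchanged when $I$ is replaced by any ideal with the same integral closure.

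For (4), suppose $M$ is generalized Cohen-Macaulay, so that $\H_\m^j(M)$ has finite length for all $0 \le j \le s-1$. Matlis duality then gives $\ell_A(M_j) = \ell_A(\H_\m^j(M))$ and in particular $\dim_A M_j \le 0$, whence $\hdeg_I(M_j) = \ell_A(M_j) = \ell_A(\H_\m^j(M))$. Substituting into Definition \ref{hdeg} collapses the formula to
$$\hdeg_I(M) = \e_I^0(M) + \sum_{j=0}^{s-1}\binom{s-1}{j}\ell_A(\H_\m^j(M)) = \e_I^0(M) + \mathbb{I}(M),$$
which is the asserted equality. The remaining inequality $\ell_A(M/QM) - \e_Q^0(M) \le \mathbb{I}(M)$ for parameter ideals $Q$ is the classical St\"{u}ckrad-Vogel bound for generalized Cohen-Macaulay modules, which I would cite directly from \cite{STC}.

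The argument is essentially bookkeeping along the inductive definition, so there is no serious obstacle; the only genuinely nontrivial external input is the St\"{u}ckrad-Vogel inequality in (4). The one point that must be checked carefully is that the induction hypothesis really does apply to each $M_j$ at every stage, that is, that $\dim_A M_j \le j \le s-1 < s$, which is exactly the content of \cite[Fact 2.1]{GO2} recalled before the definition.
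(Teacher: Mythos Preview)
Your argument is correct. The paper does not actually prove this statement: it is recorded as a \emph{Fact} (Fact~\ref{fact2}) with no proof, the items being regarded as immediate consequences of the recursive Definition~\ref{hdeg} together with the cited result \cite{STC} for the inequality in (4). Your inductive verification along $s=\dim_A M$, using \cite[Fact~2.1]{GO2} to guarantee $\dim_A M_j<s$ so that the induction hypothesis applies, is exactly the natural justification the paper leaves implicit, and nothing in it is problematic.
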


The following result plays a key role in the analysis of homological degree.

\begin{lem}\label{xyz}$(${\cite[Proposition 3.18]{V2}}$)$
Let $0 \to X \to Y \to Z \to 0$ be an exact sequence of finitely generated $A$-modules. Then the following assertions hold true:
\begin{itemize}
\item[(1)] If $\ell_A(Z) < \infty$, then $\hdeg_I(Y) \le \hdeg_I(X) + \hdeg_I(Z)$.
\item[(2)] If $\ell_A(X) < \infty$, then $\hdeg_I(Y) = \hdeg_I(X) + \hdeg_I(Z)$.
\end{itemize}
\end{lem}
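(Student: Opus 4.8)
The plan is to read off the modules $M_j=\Hom_A(\H_\m^j(M),E)$ from the long exact sequence of local cohomology attached to $0\to X\to Y\to Z\to 0$, exploiting that the Matlis dual $(-)^{\vee}=\Hom_A(-,E)$ is exact and contravariant, and then to compare the defining sums for $\hdeg_I$ of the two top-dimensional modules term by term. Two elementary additivity facts will be used throughout: the multiplicity $\e_I^0$ is additive on short exact sequences and vanishes on modules of dimension $<s$, and $\ell_A(-)$ is additive. Since $\dim_A M_j\le j$, the finite-length perturbation caused by $Z$ (in (1)) or by $X$ (in (2)) affects only the terms of low cohomological degree, so almost every term of the two sums agrees and the comparison is essentially a direct computation.

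For (2), assume $\ell_A(X)<\infty$, so that $\H_\m^0(X)=X$ and $\H_\m^j(X)=0$ for $j\ge1$; in particular $\dim_A Y=\dim_A Z=:s$. The long exact sequence yields isomorphisms $\H_\m^j(Y)\cong\H_\m^j(Z)$ for every $j\ge1$, hence $Y_j\cong Z_j$ and $\hdeg_I(Y_j)=\hdeg_I(Z_j)$ for $j\ge1$; it also yields $0\to X\to\H_\m^0(Y)\to\H_\m^0(Z)\to0$, whose Matlis dual is $0\to Z_0\to Y_0\to X^{\vee}\to0$ with $\ell_A(X^{\vee})=\ell_A(X)$. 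Because $\dim_A X\le0<s$ we also get $\e_I^0(Y)=\e_I^0(Z)$. Substituting these into the defining formula, every contribution of $Y$ in degrees $j\ge1$ equals that of $Z$, while additivity of length in degree $0$ produces the extra summand $\ell_A(X^{\vee})=\ell_A(X)=\hdeg_I(X)$; this gives the desired equality $\hdeg_I(Y)=\hdeg_I(X)+\hdeg_I(Z)$.

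For (1), assume $\ell_A(Z)<\infty$, so that $\dim_A X=\dim_A Y=:s$ and $\dim_A Z\le0$. The long exact sequence gives $\H_\m^j(X)\cong\H_\m^j(Y)$ for all $j\ge2$ (whence $\hdeg_I(X_j)=\hdeg_I(Y_j)$ there), together with the two low-degree pieces $0\to\H_\m^0(X)\to\H_\m^0(Y)\to W'\to0$ and $0\to W''\to\H_\m^1(X)\to\H_\m^1(Y)\to0$, where $W'=\Im(\H_\m^0(Y)\to Z)$ and $W''=Z/W'$ have finite length with $\ell_A(W')+\ell_A(W'')=\ell_A(Z)$. Dualizing gives $\hdeg_I(Y_0)=\hdeg_I(X_0)+\ell_A(W')$ and an embedding $Y_1\hookrightarrow X_1$ with finite-length cokernel. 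Since $\e_I^0(Y)=\e_I^0(X)$, the only surviving difference is
$$\hdeg_I(Y)-\hdeg_I(X)=\ell_A(W')+(s-1)\bigl[\hdeg_I(Y_1)-\hdeg_I(X_1)\bigr].$$

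The crux, and the step demanding care, is the inequality $\hdeg_I(Y_1)\le\hdeg_I(X_1)$ for the inclusion $Y_1\hookrightarrow X_1$ with finite-length cokernel. One cannot simply invoke part (1) recursively here, since that would bound the wrong module; instead I would argue directly, using $\dim_A Y_1,\dim_A X_1\le1$. For a module $N$ of dimension $\le1$ the defining formula collapses to $\hdeg_I(N)=\e_I^0(N)+\ell_A(\H_\m^0(N))$. The multiplicity is unchanged along $Y_1\hookrightarrow X_1$ because the cokernel has dimension $0$, while left-exactness of $\H_\m^0(-)$ gives $\H_\m^0(Y_1)\hookrightarrow\H_\m^0(X_1)$ and hence $\ell_A(\H_\m^0(Y_1))\le\ell_A(\H_\m^0(X_1))$; together these yield $\hdeg_I(Y_1)\le\hdeg_I(X_1)$. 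Consequently the bracketed term is $\le0$, and as $s-1\ge0$ we conclude $\hdeg_I(Y)-\hdeg_I(X)\le\ell_A(W')\le\ell_A(Z)=\hdeg_I(Z)$, which is (1). The degenerate cases $s\le0$ in both parts, and $s=1$ in (1) where only the degree-$0$ term survives, reduce immediately to additivity of length, since all relevant modules then have finite length.
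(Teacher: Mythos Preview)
Your argument is correct. The paper does not actually prove this lemma; it merely refers the reader to \cite[Lemma~2.4]{GO2} (and ultimately to Vasconcelos \cite[Proposition~3.18]{V2}), so there is no in-paper proof to compare against. Your direct verification from the long exact sequence of local cohomology and Matlis duality is the standard route, and the key point you handle carefully---that for the embedding $Y_1\hookrightarrow X_1$ with finite-length cokernel one cannot simply invoke part~(1) recursively but must use $\dim_A X_1\le 1$ to compare $\e_I^0$ and $\ell_A(\H_\m^0(-))$ separately---is exactly the place where a careless reader might slip. One cosmetic remark: your closing sentence about the ``degenerate cases'' is slightly imprecise, since for $s=1$ the modules $X$ and $Y$ are not themselves of finite length; what you mean (and what your earlier computation already shows) is that only the degree-$0$ contribution $\ell_A(W')\le\ell_A(Z)$ survives, which is fine.
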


\begin{proof}
See \cite[Lemma 2.4]{GO2}.
\end{proof}

Let $R=\rmR(I)=A[It] \subseteq A[t]$ be the Rees algebra of $I$ (here $t$ denotes an indeterminate over $A$) and let 
$
f : I \to R, ~a \mapsto at
$
be the identification of $I$ with $R_1 = It$. Set $$\Proj R = \{\p \mid \p \text{ is a graded prime ideal of} ~R ~\text{such that}~\p \not\supseteq R_+\}.$$
We then  have the following.

\begin{lem}\label{superficial1}$(${\cite[Theorem 2.13]{V1}}$)$
Let $M$ be a finitely generated $A$-module. Then there exists a finite subset $\calF \subseteq \Proj R$ such that 
\begin{enumerate}
\item[$(1)$] every $a \in I \setminus \bigcup_{\p \in \calF}[f^{-1}(\p) + \fkm I]$ is superficial for $M$ with respect to $I$ and 
\item[$(2)$] $\hdeg_I(M/aM) \le \hdeg_I(M)$ for each $a \in I \setminus \bigcup_{\p \in \calF}[f^{-1}(\p) + \fkm I]$.
\end{enumerate}
\end{lem}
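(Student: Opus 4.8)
The plan is to treat the two assertions separately. Part (1) is essentially the classical existence theorem for superficial elements, reformulated so that the bad locus sits visibly inside $\Proj R$; part (2) is the substantive claim, and I would prove it by induction on $s=\dim_A M$, using the recursive shape of Definition \ref{hdeg} together with Lemma \ref{xyz}. Throughout, I would assemble the required finite set $\calF\subseteq\Proj R$ as a union of finitely many finite pieces, one per genericity condition; in each case the corresponding bad subset of $I$ has the form $[f^{-1}(\p)+\m I]$ for a suitable $\p\in\Proj R$, hence maps to a proper $k$-subspace of the finite-dimensional space $I/\m I$ (properness because $f$ identifies $I$ with $R_1$ and carries $f^{-1}(\p)$ onto the proper submodule $\p\cap R_1$, so Nakayama applies). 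Since $A/\m$ is infinite, a finite union of such subspaces remains proper, which is exactly why $I\setminus\bigcup_{\p\in\calF}[f^{-1}(\p)+\m I]$ is nonempty.

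For part (1) I would recall that $a\in I$ is superficial for $M$ with respect to $I$ exactly when its degree-one initial form in $\gr_I(A)$ avoids every associated prime of $\gr_I(M)$ not containing $\gr_I(A)_+$; contracting this finite family of relevant primes to $R=\rmR(I)$ (equivalently, taking the relevant associated primes of the Rees module $\bigoplus_{n\ge0}I^nM\,t^n$) furnishes the first batch of primes for $\calF$. I would also record two standard consequences of superficiality for later use: since $I$ is $\m$-primary, such an $a$ is automatically a parameter for $M$ once $s\ge1$, and $(0:_M a)$ has finite length (from $(0:_M a)\subseteq(I^{n+1}M:_M a)$ and the superficiality estimate one gets $(0:_M a)\cap I^cM=(0)$ for $c\gg0$); in particular $(0:_M a)\subseteq\H^0_\m(M)$.

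For part (2) I would induct on $s$. The cases $s\le1$ are elementary: if $s=0$ then $M/aM$ is a quotient of $M$; if $s=1$ I would also put into $\calF$ the primes forcing $(a)$ to be a reduction of $I$ on $M$ (again a $\Proj R$-condition, via the fiber cone $R/\m R$ of $M$), and then read off $\ell_A(M/aM)\le\ell_A(L)+\e_I^0(M)=\hdeg_I(M)$ from the exact sequence $0\to L/aL\to M/aM\to N/aN\to0$, where $L=\H^0_\m(M)$ and $N=M/L$ is Cohen-Macaulay of dimension $1$ (exactness because the parameter $a$ is a nonzerodivisor on $N$, and $\ell_A(N/aN)=\e_I^0(N)=\e_I^0(M)$ because $(a)$ is a reduction). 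For $s\ge2$, assuming the lemma for dimension $<s$, I would enlarge $\calF$ so that in addition $a$ is superficial for each $M_j$ $(0\le j\le s-2)$ and $a$ lies in the good locus provided by the induction hypothesis for each $M_j$ $(1\le j\le s-1)$. Splitting multiplication by $a$ on $M$ into $0\to(0:_M a)\to M\to aM\to0$ and $0\to aM\to M\to M/aM\to0$, passing to local cohomology (the finite length of $(0:_M a)$ gives $\H^i_\m(aM)\cong\H^i_\m(M)$ for all $i\ge1$, affecting only degree $0$, and under these isomorphisms the maps induced by $aM\hookrightarrow M$ become multiplication by $a$), and then applying Matlis duality, I expect to obtain, for each $0\le j\le s-2$, a short exact sequence
$$
0\longrightarrow M_{j+1}/aM_{j+1}\longrightarrow (M/aM)_j\longrightarrow (0:_{M_j}a)\longrightarrow0,
$$
the case $j=0$ relying on the finite length of $\H^0_\m(M)$ (whence $aM\cap\H^0_\m(M)=a\H^0_\m(M)$). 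Since $(0:_{M_j}a)$ has finite length, Lemma \ref{xyz}(1) gives $\hdeg_I((M/aM)_j)\le\hdeg_I(M_{j+1}/aM_{j+1})+\ell_A((0:_{M_j}a))\le\hdeg_I(M_{j+1})+\hdeg_I(M_j)$, the last inequality using the induction hypothesis on $M_{j+1}$ for the first summand and, for the second, the inclusion $(0:_{M_j}a)\subseteq\H^0_\m(M_j)$ together with $\hdeg_I(M_j)\ge\ell_A(\H^0_\m(M_j))$ (immediate from the nonnegativity of the terms in Definition \ref{hdeg}). Finally, $\e_I^0(M/aM)=\e_I^0(M)$ since $a$ is superficial and $s\ge2$; summing the displayed inequalities against $\binom{s-2}{j}$ and invoking $\binom{s-1}{j}=\binom{s-2}{j}+\binom{s-2}{j-1}$ turns $\sum_{j=0}^{s-2}\binom{s-2}{j}(\hdeg_I(M_j)+\hdeg_I(M_{j+1}))$ into $\sum_{j=0}^{s-1}\binom{s-1}{j}\hdeg_I(M_j)$, yielding exactly $\hdeg_I(M/aM)\le\hdeg_I(M)$.

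The step I expect to be the main obstacle is the short exact sequence $0\to M_{j+1}/aM_{j+1}\to (M/aM)_j\to(0:_{M_j}a)\to0$: one must chase the two local cohomology sequences carefully across cohomological degrees $0$ and $1$, keep precise track of the finite-length corrections introduced by $(0:_M a)$ (and use that $\H^0_\m$ of a finitely generated module has finite length), and identify the relevant connecting maps with multiplication by $a$. The only other point needing care is organizational: one must verify that every genericity hypothesis used along the way---superficiality and the parameter property for $M$ and for all the $M_j$, the reduction condition in the case $s=1$, and the good loci supplied by the induction hypothesis---really takes the form ``$a$ lies outside $\bigcup_{\p\in\calF}[f^{-1}(\p)+\m I]$'' for some finite $\calF\subseteq\Proj R$, so that one finite $\calF$ serves all of (1) and (2) at once. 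Granting these, the remaining bookkeeping with Definition \ref{hdeg}, Lemma \ref{xyz}, and Pascal's identity is routine.
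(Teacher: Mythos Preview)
Your argument is correct and complete. The paper itself does not prove this lemma: it simply refers the reader to \cite[Lemma~2.6]{GO2}, which in turn records Vasconcelos's result \cite[Theorem~2.13]{V1}. Your inductive scheme---obtaining the short exact sequences $0\to M_{j+1}/aM_{j+1}\to(M/aM)_j\to(0:_{M_j}a)\to0$ from the local-cohomology long exact sequence via Matlis duality, bounding each term with Lemma~\ref{xyz}(1) and the induction hypothesis, and reassembling with Pascal's identity---is exactly the standard proof one finds in those sources (and is, incidentally, the same mechanism the present paper uses later in the proofs of Theorem~\ref{e2} and Theorem~\ref{main2}). So there is no genuine methodological difference to report; you have simply supplied what the paper outsources.

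One small correction of attribution, not of substance: at $j=0$ the identity $aM\cap\H^0_\m(M)=a\H^0_\m(M)$ is not a consequence of the finite length of $\H^0_\m(M)$ but rather of the inclusion $(0:_Ma)\subseteq\H^0_\m(M)$, which you have already secured via the finite length of $(0:_Ma)$. Indeed, if $am\in\H^0_\m(M)$ then $\m^n m\subseteq(0:_Ma)\subseteq\H^0_\m(M)$ for some $n$, so $\m^{n+k}m=0$ for suitable $k$ and hence $m\in\H^0_\m(M)$. This is purely cosmetic and does not affect the validity of the argument.
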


\begin{proof}
See \cite[Lemma 2.6]{GO2}.
\end{proof}

We recall the definition of homological torsions.

\begin{defn}\label{torsion}
Let $M$ be a finitely generated $A$--module with $s = \dim_A M \ge 2$. We set $$\rmT_I^i(M) = \sum_{j=1}^{s-i}\binom{s-i-1}{j-1} \hdeg_I(M_j)$$
for each $1 \leq i \leq s-1$ and call them the homological torsions of $M$ with respect to $I$.
\end{defn}

The following Lemma 2.6 is a key for a proof of Theorem 1.3 (Theorem 3.3).

\begin{lem}\label{superficial2}
Let $M$ be a finitely generated $A$--module with $s = \dim_A M \ge 3$ and $I$  an $\m$-primary ideal of $A$. 
Then, for each $1 \leq i \leq s-2$, there exists a finite subset $\calF \subseteq \Proj R$ such that every $a \in I \setminus \bigcup_{\p \in \calF}[f^{-1}(\p) + \fkm I]$ is superficial for $M$ with respect to $I$, satisfying the inequality 
$$\rmT_I^i(M/aM) \le \rmT_I^i(M).$$
\end{lem}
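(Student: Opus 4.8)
The plan is to pass to a sufficiently general superficial element $a\in I$, to express the modules $(M/aM)_j$ in terms of $M_j$ and $M_{j+1}$ through local cohomology, and then to estimate homological degrees using Lemmas \ref{xyz} and \ref{superficial1}. Fix $i$ with $1\le i\le s-2$. Applying Lemma \ref{superficial1} to each of the finitely many modules $M,M_1,M_2,\dots,M_{s-i}$ and taking the union of the resulting exceptional sets, I would obtain a finite subset $\calF\subseteq\Proj R$ such that every $a\in I\setminus\bigcup_{\p\in\calF}[f^{-1}(\p)+\fkm I]$ is superficial for $M$ and for each $M_j$ with $1\le j\le s-1-i$, and moreover satisfies $\hdeg_I(M_{j+1}/aM_{j+1})\le\hdeg_I(M_{j+1})$ for $1\le j\le s-1-i$. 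Fix such an $a$. Then $\dim_A M/aM=s-1\ge2$, so that $\rmT_I^i(M/aM)$ is defined, and each of $(0:_M a)$ and $(0:_{M_j}a)$ has finite length, since $a$ is superficial for the module in question.

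Next I would extract the key short exact sequences. Because $(0:_M a)$ has finite length, the long exact sequence of local cohomology of $0\to(0:_M a)\to M\to aM\to0$ (second map multiplication by $a$) gives $\H_\fkm^j(aM)\cong\H_\fkm^j(M)$ for every $j\ge1$, an isomorphism under which the map induced by the inclusion $aM\hookrightarrow M$ becomes multiplication by $a$ on $\H_\fkm^j(M)$; feeding this into the long exact sequence of $0\to aM\to M\to M/aM\to0$ yields, for every $j\ge1$, an exact sequence
$$0\to\H_\fkm^j(M)/a\,\H_\fkm^j(M)\to\H_\fkm^j(M/aM)\to(0:_{\H_\fkm^{j+1}(M)}a)\to0 .$$
Applying the Matlis dual $\Hom_A(-,E)$, which is exact and interchanges the operations $N\mapsto N/aN$ and $N\mapsto(0:_Na)$ on Artinian modules, and using $(M/aM)_j=\Hom_A(\H_\fkm^j(M/aM),E)$, one obtains for every $j\ge1$ the short exact sequence
$$0\to M_{j+1}/aM_{j+1}\to(M/aM)_j\to(0:_{M_j}a)\to0 .$$

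Now I would estimate and sum. As $(0:_{M_j}a)$ has finite length, Lemma \ref{xyz}(1) gives $\hdeg_I((M/aM)_j)\le\hdeg_I(M_{j+1}/aM_{j+1})+\ell_A((0:_{M_j}a))$; by the choice of $a$ the first term is at most $\hdeg_I(M_{j+1})$, while $(0:_{M_j}a)$, being a finite length submodule of $M_j$, is contained in $\H_\fkm^0(M_j)$, whence $\ell_A((0:_{M_j}a))\le\ell_A(\H_\fkm^0(M_j))=\hdeg_I((M_j)_0)\le\hdeg_I(M_j)$, the last inequality being immediate from Definition \ref{hdeg}. Therefore $\hdeg_I((M/aM)_j)\le\hdeg_I(M_{j+1})+\hdeg_I(M_j)$ for all $j\ge1$. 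Substituting this bound into $\rmT_I^i(M/aM)=\sum_{j=1}^{s-1-i}\binom{s-i-2}{j-1}\hdeg_I((M/aM)_j)$, separating the two summands, reindexing the one involving $M_{j+1}$ by $l=j+1$, and invoking Pascal's identity $\binom{s-i-2}{l-2}+\binom{s-i-2}{l-1}=\binom{s-i-1}{l-1}$, one finds that the resulting bound is exactly $\sum_{l=1}^{s-i}\binom{s-i-1}{l-1}\hdeg_I(M_l)=\rmT_I^i(M)$, which is the assertion.

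The main obstacle is the middle step: one must check carefully that superficiality of $a$ forces $(0:_M a)$ to have finite length and that consequently the local cohomology of $M/aM$ sits in the displayed sequences with the connecting maps being multiplication by $a$, and --- the genuinely delicate point --- that the error term $(0:_{M_j}a)$ is bounded not merely by a finite length but by $\hdeg_I(M_j)$ itself; it is precisely this last bound that makes the final binomial bookkeeping collapse exactly onto $\rmT_I^i(M)$.
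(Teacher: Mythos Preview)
Your argument is correct and follows precisely the approach that underlies the cited reference \cite[Lemma 2.8]{GO2}: choose $a$ via Lemma~\ref{superficial1} applied to $M$ and the $M_j$, extract the short exact sequences $0\to M_{j+1}/aM_{j+1}\to (M/aM)_j\to (0:_{M_j}a)\to 0$ from local cohomology and Matlis duality, bound with Lemma~\ref{xyz}(1), and collapse the sum with Pascal's rule. The same computation is carried out verbatim in the paper's proof of Theorem~\ref{e2}(2), so there is no divergence in method.
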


\begin{proof}
See \cite[Lemma 2.8]{GO2}.
\end{proof}

Notice that 
$$ \hdeg_I(M)-\rmT_I^1(M)=\e_I^0(M)+\sum_{i=0}^{s-2}\binom{d-2}{i}\hdeg_I(M_i) $$
holds true.
We then have the following.

\begin{lem}\label{superficial3}
Let $M$ be a finitely generated $A$--module with $s = \dim_A M \ge 3$ and $I$ an $\m$-primary ideal of $A$.
Then, there exists a finite subset $\calF \subseteq \Proj R$ such that every $a \in I \setminus \bigcup_{\p \in \calF}[f^{-1}(\p) + \fkm I]$ is superficial for $M$ with respect to $I$, satisfying the inequality 
$$ \hdeg_I(M/aM) - \rmT_I^1(M/aM) \le \hdeg_I(M) - \rmT_I^1(M).$$
\end{lem}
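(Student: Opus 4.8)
The plan is to reduce the asserted inequality to the identity
$$\hdeg_I(M)-\rmT_I^1(M)=\e_I^0(M)+\sum_{j=0}^{s-2}\binom{s-2}{j}\hdeg_I(M_j)$$
recorded just before the statement. Setting $N=M/aM$, which has $\dim_A N=s-1\ge 2$, the same identity gives $\hdeg_I(N)-\rmT_I^1(N)=\e_I^0(N)+\sum_{j=0}^{s-3}\binom{s-3}{j}\hdeg_I(N_j)$; since a superficial element of $I$ for $M$ preserves multiplicity, $\e_I^0(N)=\e_I^0(M)$, so it suffices to find a superficial element $a$ for $M$ with respect to $I$ such that
$$\sum_{j=0}^{s-3}\binom{s-3}{j}\hdeg_I(N_j)\ \le\ \sum_{j=0}^{s-2}\binom{s-2}{j}\hdeg_I(M_j).$$

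The engine of the argument is the short exact sequence
$$0\longrightarrow M_{j+1}/aM_{j+1}\longrightarrow N_j\longrightarrow (0:_{M_j}a)\longrightarrow 0\qquad(j\in\Z),$$
valid for every $a$ that is superficial for $M$ with respect to $I$. I would obtain it by applying Matlis duality $\Hom_A(-,E)$ to the exact sequence of local cohomology modules
$$0\to \H_\m^j(M)/a\H_\m^j(M)\to \H_\m^j(N)\to (0:_{\H_\m^{j+1}(M)}a)\to 0,$$
which is itself extracted from the two exact sequences $0\to (0:_Ma)\to M\to M/(0:_Ma)\to 0$ and $0\to M/(0:_Ma)\xrightarrow{a}M\to N\to 0$ by passing to local cohomology and using that $\ell_A(0:_Ma)<\infty$ for superficial $a$; the finiteness yields $\H_\m^i(M)\cong\H_\m^i(M/(0:_Ma))$ for $i\ge 1$, under which identification the pertinent connecting maps become multiplication by $a$. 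This is in substance a standard local cohomology computation (compare \cite{GO2}).

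Next I choose $\calF$. Applying Lemma \ref{superficial1} to each of the finitely many modules $M,M_0,M_1,\dots,M_{s-2}$ and taking the union of the resulting finite subsets of $\Proj R$ produces a finite $\calF\subseteq\Proj R$ such that every $a\in I\setminus\bigcup_{\p\in\calF}[f^{-1}(\p)+\fkm I]$ is superficial for $M$ and for every $M_j$ with $0\le j\le s-2$, and in addition satisfies $\hdeg_I(M_j/aM_j)\le\hdeg_I(M_j)$ for those $j$. Fix such an $a$. Then $0:_{M_j}a$ has finite length for every $j$ (it is a submodule of $M_j$, which either has dimension $\le 0$ or has $a$ as a superficial element), so Lemma \ref{xyz} applies to the sequence above and yields, for $0\le j\le s-3$,
$$\hdeg_I(N_j)\ \le\ \hdeg_I(M_{j+1}/aM_{j+1})+\hdeg_I(0:_{M_j}a)\ \le\ \hdeg_I(M_{j+1})+\hdeg_I(M_j),$$
where I used $\hdeg_I(0:_{M_j}a)=\ell_A(0:_{M_j}a)\le\hdeg_I(M_j)$, which again follows from Lemma \ref{xyz}(2) since $0:_{M_j}a$ is of finite length.

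It remains to insert this estimate into the left-hand sum and reindex: rewriting the $\hdeg_I(M_{j+1})$-contribution under $j\mapsto j+1$ and collecting coefficients by Pascal's rule $\binom{s-3}{j}+\binom{s-3}{j-1}=\binom{s-2}{j}$, the coefficient of each $\hdeg_I(M_j)$ becomes exactly $\binom{s-2}{j}$, the boundary indices $j=0$ and $j=s-2$ matching automatically; this is precisely the displayed inequality, and adding $\e_I^0(M)=\e_I^0(N)$ to both sides completes the proof. The one genuinely delicate point is the construction of the short exact sequence $0\to M_{j+1}/aM_{j+1}\to N_j\to(0:_{M_j}a)\to 0$ — in particular checking that $\ell_A(0:_Ma)<\infty$, that the connecting homomorphisms reduce to multiplication by $a$ after the identification of local cohomologies, and that Matlis duality converts the cokernel and kernel terms into $M_{j+1}/aM_{j+1}$ and $0:_{M_j}a$; once that sequence is in hand, what remains is the routine binomial bookkeeping sketched above.
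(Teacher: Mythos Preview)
Your argument is correct. The paper itself does not prove this lemma but cites \cite[Lemma 2.7]{GO1}; your proof, based on the Matlis-dual short exact sequence $0\to M_{j+1}/aM_{j+1}\to N_j\to (0:_{M_j}a)\to 0$, Lemma~\ref{xyz}, Lemma~\ref{superficial1}, and Pascal's rule, is precisely the template the paper uses elsewhere (compare the proofs of Theorem~\ref{e2} and Theorem~\ref{main2}), so it is almost certainly the intended argument.
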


\begin{proof}
See \cite[Lemma 2.7]{GO1}.
\end{proof}

\section{Bounds for the second Hilbert coefficients of parameters}

The purpose of this section is to estimate the second Hilbert coefficients of parameters in terms of the homological degrees and torsions of modules. 
For the estimation of $\e_Q^2(M)$, the key is the following result.

\begin{prop}\label{d=2}$(${\cite[Proposition 3.4]{GO3}}, \cite[Theorem 4.2]{GHV}$)$
Suppose that $M$ is a finitely generated $A$-module with $d=\dim_AM=2$ and $\depth_AM \geq 1$.
Let $Q=(a_1,a_2)$ be a parameter ideal of $A$ and assume that $a_1$ a superficial element for $M$ with respect to $Q$.
Then we have
$$-\ell_A(\H_{\m}^1(M)) \leq \e_Q^2(M) \leq 0 $$
and the following conditions are equivalent:
\begin{itemize}
\item[$(1)$] $\g_s(Q;M)=0$,
\item[$(2)$] $\e_Q^2(M)=0$,
\item[$(3)$] $a_1,a_2$ forms a $d$-sequence on $M$,
\item[$(4)$] $a_1^{\ell},a_2^{\ell}$ forms a $d$-sequence on $M$ for all $\ell \geq 1$.
\end{itemize}
\end{prop}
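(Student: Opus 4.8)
The plan is to reduce the statement to a one–dimensional module, to read off the Hilbert coefficients of $M$ by tracking lengths through two short exact sequences, and then to harvest the equivalences from the resulting exact formula for $\rme_Q^2(M)$.

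Since $\depth_A M \geq 1$, the element $a_1$ is a nonzerodivisor on $M$: a superficial element always has $(0:_M a_1)$ of finite length, and $\H_{\m}^0(M) = 0$. Put $\overline{M} = M/a_1M$, so $\dim_A \overline{M} = 1$; let $U = \H_{\m}^0(\overline{M})$, which has finite length, and $\overline{M}' = \overline{M}/U$, which is Cohen--Macaulay of dimension $1$. Because $a_1\overline{M} = 0$ we have $Q\overline{M} = (a_2)\overline{M}$, and $a_2$ is a parameter for the one--dimensional modules $\overline{M}$ and $\overline{M}'$; in particular $\ell_A(\overline{M}'/Q^{n+1}\overline{M}') = (n+1)\rme_Q^0(M)$ for all $n \geq 0$.

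I would then compare $\ell_A(M/Q^{n+1}M)$ with this Cohen--Macaulay value by tracking lengths through $0 \to M \xrightarrow{a_1} M \to \overline{M} \to 0$ and $0 \to U \to \overline{M} \to \overline{M}' \to 0$. Since $a_1$ is a nonzerodivisor, $a_1M \cap Q^{n+1}M = a_1\,(Q^{n+1}M :_M a_1)$; comparing this against $a_1Q^nM$ costs $w_n := \ell_A\big((Q^{n+1}M :_M a_1)/Q^nM\big) \geq 0$, while the second sequence costs $v_n := \ell_A\big(U \cap Q^{n+1}\overline{M}\big) = \ell_A(a_2^{n+1}U) \geq 0$. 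Telescoping gives, for all $n \geq 0$,
$$ \ell_A(M/Q^{n+1}M) = \rme_Q^0(M)\binom{n+2}{2} + (n+1)\ell_A(U) - \sum_{k=0}^{n}(v_k+w_k), $$
whence $\rme_Q^1(M) = -\ell_A(U)$ and, on comparison with the Hilbert polynomial, $\rme_Q^2(M) = -\sum_{k\geq 0}(v_k+w_k) \leq 0$, with equality if and only if $v_k = w_k = 0$ for all $k \geq 0$; this is the upper bound. For the lower bound one may assume $\ell_A(\H_{\m}^1(M)) < \infty$, since otherwise it is vacuous; passing to Matlis duals, $U \cong (0:_{\H_{\m}^1(M)}a_1)$ and $M_1 = \Hom_A(\H_{\m}^1(M), E)$ has finite length, and one must realize all the $v_k$ and $w_k$ as lengths of subquotients of a single filtration of $M_1$, so that $\sum_{k\geq 0}(v_k+w_k) \leq \ell_A(M_1) = \ell_A(\H_{\m}^1(M))$. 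I expect this comparison --- in particular, showing that the ``finite level'' superficiality defects $w_k$ are genuinely paid for inside $M_1$ --- to be the main obstacle; it is the content of \cite[Theorem 4.2]{GHV} and \cite[Proposition 3.4]{GO3}, whose bookkeeping I would follow.

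For the equivalences, note that $(2)$ says exactly $v_k = w_k = 0$ for all $k$. From $Q^{n+1}\overline{M} \subseteq Q\overline{M}$ we get $v_k \leq v_0$, so all $v_k$ vanish if and only if $v_0 = 0$; and since $U \cap Q\overline{M} = a_2U$, the condition $v_0 = 0$ reads $a_2U = 0$, i.e. $U = (0:_{\overline{M}}a_2)$, i.e. $[(a_1)M :_M a_2^2] = [(a_1)M :_M a_2]$ --- which, the remaining defining relations of a $d$--sequence for $a_1,a_2$ being automatic once $a_1$ is a nonzerodivisor, is precisely $(3)$. Reducing $0 \to U \to \overline{M} \to \overline{M}' \to 0$ modulo $a_2$ shows $\ell_A(M/QM) - \rme_Q^0(M) = \ell_A(U/a_2U)$, so $\g_s(Q;M) = \big(\ell_A(M/QM) - \rme_Q^0(M)\big) + \rme_Q^1(M) = -\ell_A(a_2U) = -v_0$, whence $(1)$ is equivalent to $v_0 = 0$. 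The only implication with content is then $v_0 = 0 \Rightarrow w_k = 0$ for all $k$: from $v_0 = 0$ one deduces $(0:_{\overline{M}}a_2^{k+1}) = (0:_{\overline{M}}a_2)$ for every $k$, and, given $m$ with $a_1m \in Q^{k+1}M$, stripping off the terms of $a_1m$ divisible by $a_1$ reduces matters to $a_1m' \in (a_2^{k+1})M$ with $m \equiv m' \pmod{Q^kM}$, after which $(0:_{\overline{M}}a_2^{k+1}) = (0:_{\overline{M}}a_2)$ together with the regularity of $a_1$ on $M$ force $m' \in (a_2^k)M$. This establishes $(1)\Leftrightarrow(2)\Leftrightarrow(3)$. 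Finally $(4)\Rightarrow(3)$ is the case $\ell = 1$, and $(3)\Rightarrow(4)$ follows from the stability of parameter $d$--sequences under raising the generators to powers (cf. \cite{H}), which can also be re--derived by applying $(2)\Leftrightarrow(3)$ to the parameter ideal $(a_1^\ell, a_2^\ell)$, using that powers of a superficial element remain superficial.
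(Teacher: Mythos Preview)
Your approach is sound in its core and genuinely different from the paper's. The paper does not compute anything directly: it passes to a Cohen--Macaulay base $A$ via a Cohen presentation, forms the idealization $C=A\ltimes M$, observes that $\e_Q^2(M)=\e_{QC}^2(C)$ and $\H_\m^1(M)\cong\H_\n^1(C)$, and then simply invokes the ring case \cite[Proposition 3.4]{GO3} for both bounds and for $(2)\Rightarrow(4)$, dispatching the remaining implications by further citations to \cite{GO1,GO2}. You instead reduce by the superficial element $a_1$ and obtain the explicit formula $\e_Q^2(M)=-\sum_{k\ge 0}(v_k+w_k)$, from which the upper bound and the equivalences $(1)\Leftrightarrow(2)\Leftrightarrow(3)$ are read off directly; your verification that $v_0=0$ forces all $w_k=0$ is correct and is precisely the substance of these equivalences. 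Your route is more self-contained and yields finer information (the correction terms), at the cost of length; the paper's is a two-line reduction once the ring case is granted.

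Two caveats. First, for the lower bound you only gesture at the references; the paper, by contrast, actually \emph{completes} a reduction (to the ring $C$) before citing, so your treatment here is strictly less complete. Second, your argument for $(3)\Rightarrow(4)$ does not stand as written: the general ``stability of $d$-sequences under powers'' is not in \cite{H}, and your alternative---applying $(2)\Leftrightarrow(3)$ to $(a_1^\ell,a_2^\ell)$---would require $\e_{(a_1^\ell,a_2^\ell)}^2(M)=0$, which is not an immediate consequence of $\e_Q^2(M)=0$. The clean patch is a direct induction on $\ell$: setting $U_\ell=\H_\m^0(M/a_1^\ell M)$, from $a_2U_1=0$ one shows $a_2^\ell U_\ell=0$ (given $\bar m\in U_\ell$, write $a_2m=a_1m_1$; then $\bar m_1\in U_{\ell-1}$, so $a_2^{\ell-1}m_1\in a_1^{\ell-1}M$ by induction, whence $a_2^\ell m\in a_1^\ell M$), and this is exactly the $d$-sequence condition $[(a_1^\ell)M:_Ma_2^{2\ell}]=[(a_1^\ell)M:_Ma_2^\ell]$.
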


\begin{proof}
Since $A$ is complete, there exists a surjective homomorphism $\varphi:B \to A$ of rings, where $B$ is a Cohen-Macaulay complete local ring with $\dim B=\dim A$ and a system $\alpha_1,\alpha_2$ of parameters of $B$ such that $\varphi(\alpha_i)=a_i$ for $i=1,2$.
Therefore, passing to the ring $B$, we may assume that $A$ is a Cohen-Macaulay ring. Let $C=A \ltimes M$ denote the idealization of $M$ over $A$.
Then $C$ is a Noetherian local ring with maximal ideal $\n=\m \ltimes M$ and $\dim C= d$. We have
$$\ell_A(C/Q^{n+1}C) = \ell_A(A/Q^{n+1}) + \ell_A(M/Q^{n+1}M)$$
for all $n \geq 0$ and hence
\begin{eqnarray*}
\ell_A(M/Q^{n+1}M) &=&  \ell_C(C/Q^{n+1}C) - \ell_A(A/Q^{n+1})\\
&=& \left\{\e_{QC}^0(C)\binom{n+2}{2}-\e_{QC}^1(C)\binom{n+1}{1}+\e_{QC}^2(C) \right\} - \ell_A(A/Q)\binom{n+2}{2}\\
&=& \{\e_{QC}^0(C)-\ell_A(A/Q)\}\binom{n+2}{2}-\e_{QC}^1(C)\binom{n+1}{1}+\e_{QC}^2(C)
\end{eqnarray*}
for all $n \gg 0$ because the ring $A$ is Cohen-Macaulay.
Therefore $$-\ell_A(\H_{\m}^1(M)) \leq \e_{Q}^2(M) \leq 0$$ because $\e_Q^2(M)=\e_{QC}^2(C)$, $\H_{\m}^1(M) \cong \H_{\n}^1(C)$ as the ring $A$ is Cohen-Macaulay, and $-\ell_C(\H_{\n}^1(C)) \leq \e_{QC}^2(C) \leq 0$ by \cite[Proposition 3.4]{GO3}.

Let us consider the second assertion.

$(2) \Rightarrow (4)$
Since $\e_Q^2(C)=\e_Q^2(M)=0$, $a_1^{\ell}, a_2^{\ell}$ forms a $d$-sequence on $C$ for all $\ell \geq 1$ by Proposition \cite[Proposition 3.4]{GO3}.
Therefore $a_1^{\ell}, a_2^{\ell}$ forms a $d$-sequence on $M$ for all $\ell \geq 1$, because $A$ is a Cohen-Macaulay ring.

$(4) \Rightarrow (3)$
This is clear.

$(3) \Rightarrow (1)$ and $(2)$
We have $\ell_A(M/QM)=\e_Q^0(M)-\e_Q^1(M)+\e_Q^2(M)$ and $\e_Q^2(M)=0$ (\cite[Proposition 3.7]{GO2}).
Hence $\g_s(Q;M)=\ell_A(M/QM)-\e_Q^0(M)+\e_Q^1(M)=0$.

$(1) \Rightarrow (3)$
By \cite[Theorem 1.3]{GO1}.
\end{proof}

Passing to $M/\H_{\m}^0(M)$ we have the following inequalities.

\begin{cor}\label{d=2'}$($c.f. \cite[Corollary 3.5]{GO3}$)$
Suppose that $d=2$ and let $Q$ be a parameter ideal of $A$.
Then
$$\ell_A(\H_{\m}^0(M))-\ell_A(\H_{\m}^1(M)) \leq \e_Q^2(M) \leq \ell_A(\H_{\m}^0(M))$$
for every finitely generated $A$-module $M$ with $d=\dim_AM$.
\end{cor}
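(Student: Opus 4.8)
The statement to prove is Corollary~\ref{d=2'}, which extends Proposition~\ref{d=2} to arbitrary finitely generated modules $M$ of dimension $d = 2$, by dropping the depth hypothesis.

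\medskip

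The plan is to reduce to the depth-positive case handled by Proposition~\ref{d=2} via the short exact sequence
$$
0 \to \H_{\m}^0(M) \to M \to \overline{M} \to 0,
$$
where $\overline{M} = M/\H_{\m}^0(M)$. First I would record that $\H_{\m}^0(M)$ has finite length, so $\dim_A \overline{M} = 2$ and $\depth_A \overline{M} \ge 1$, hence $\overline{M}$ satisfies the hypotheses of Proposition~\ref{d=2}. From the local cohomology long exact sequence of the displayed sequence one gets $\H_{\m}^j(M) \cong \H_{\m}^j(\overline{M})$ for $j \ge 1$, and in particular $\ell_A(\H_{\m}^1(M)) = \ell_A(\H_{\m}^1(\overline{M}))$. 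Second, I would compare the Hilbert functions: tensoring with $A/Q^{n+1}$ and using that $Q^{n+1} \cdot \H_{\m}^0(M) = (0)$ for $n \gg 0$ (as $\H_{\m}^0(M)$ is annihilated by a power of $\m$ and $Q$ is $\m$-primary), one obtains
$$
\ell_A(M/Q^{n+1}M) = \ell_A(\overline{M}/Q^{n+1}\overline{M}) + \ell_A(\H_{\m}^0(M))
$$
for all $n \gg 0$. Comparing the two Hilbert polynomials degree by degree yields $\e_Q^0(M) = \e_Q^0(\overline{M})$, $\e_Q^1(M) = \e_Q^1(\overline{M})$, and crucially
$$
\e_Q^2(M) = \e_Q^2(\overline{M}) + \ell_A(\H_{\m}^0(M)).
$$

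\medskip

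Finally, I would apply Proposition~\ref{d=2} to $\overline{M}$: choosing $Q$ as in the proposition (the statement is invariant under replacing $Q$ by a minimal reduction with a superficial element, and both bounds depend only on $\overline Q$), we get
$$
-\ell_A(\H_{\m}^1(\overline{M})) \le \e_Q^2(\overline{M}) \le 0.
$$
Adding $\ell_A(\H_{\m}^0(M))$ throughout and substituting the two identities from the previous paragraph gives exactly
$$
\ell_A(\H_{\m}^0(M)) - \ell_A(\H_{\m}^1(M)) \le \e_Q^2(M) \le \ell_A(\H_{\m}^0(M)),
$$
as desired. A minor point to address is the case $\overline{M} = (0)$, i.e.\ $\ell_A(M) < \infty$, which is excluded since $\dim_A M = 2$; and one should note that a superficial element for $M$ with respect to $Q$ is also superficial for $\overline{M}$, so no incompatibility arises in the reduction.

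\medskip

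I do not anticipate a serious obstacle here: the argument is a routine idealization-style reduction modulo the finite-length submodule $\H_{\m}^0(M)$, and every ingredient (stability of higher local cohomology under killing $\H_{\m}^0$, the Hilbert-coefficient shift, and the invariance of the bounds under integral closure of $Q$) is standard or already available from Proposition~\ref{d=2}. The only place requiring a little care is the bookkeeping that the chosen parameter ideal $Q$ simultaneously works for the proposition applied to $\overline M$ and realizes the asserted bounds for $M$; this is handled by the observation that superficiality passes to $\overline M$ and that all quantities involved depend only on $\overline{Q}$.
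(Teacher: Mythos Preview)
Your approach is correct and is exactly what the paper does: the paper's proof consists of the single phrase ``Passing to $M/\H_{\m}^0(M)$,'' and you have spelled out precisely the details behind that reduction. The only minor imprecision is that the injectivity of $\H_{\m}^0(M) \to M/Q^{n+1}M$ for $n \gg 0$ requires Artin--Rees (not just that $Q^{n+1}\H_{\m}^0(M)=0$), but this is standard and your conclusion $\e_Q^2(M) = \e_Q^2(\overline{M}) + \ell_A(\H_{\m}^0(M))$ is correct.
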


We then have the following.
Recall that a finitely generated $A$-module $M$ is said to be unmixed, if $\dim A/\p=\dim_AM$ for all $\p \in \Ass_{A}M$.

\begin{thm}\label{e2}
Let $M$ be a finitely generated $A$-module with $d =\dim_AM \ge 3$.
Then we have the following:
\begin{itemize}
\item[(1)] we have
$$\rme_Q^2(M) \leq \rmT_Q^2(M)$$
for every parameter ideals $Q$ of $A$.
\item[(2)] We have $$-\sum_{j=2}^{d-1} \binom{d-3}{j-2} \hdeg_Q(M_j) \le \rme_Q^2(M)$$ for every parameter ideals $Q$ of $A$, if $M$ is unmixed.
\end{itemize}
\end{thm}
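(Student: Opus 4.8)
The plan is to run an induction on $d = \dim_A M$, reducing both inequalities to the two-dimensional base case supplied by Proposition~\ref{d=2} (or rather its consequence Corollary~\ref{d=2'}, since we cannot assume positive depth). The engine of the reduction is the behaviour of $\e_Q^2$ under cutting down by a superficial element. Concretely, if $a \in Q$ is superficial for $M$ with respect to $Q$, set $\overline{M} = M/aM$; then the standard comparison of Hilbert functions gives $\e_Q^2(M) = \e_Q^2(\overline{M}) + (\text{a correction involving lower local cohomology of } M)$. Since we may choose $a$ avoiding a finite set of primes in $\Proj R$, we can invoke Lemma~\ref{superficial1}, Lemma~\ref{superficial2} and Lemma~\ref{superficial3} simultaneously, so that $a$ is superficial and all the relevant homological quantities $\hdeg_Q(-)$, $\rmT_Q^2(-)$ and $\hdeg_Q(-)-\rmT_Q^1(-)$ do not increase upon passing to $\overline{M}$.

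For part (1): first I would record the exact relation between $\e_Q^2(M)$ and $\e_Q^2(M/aM)$ for superficial $a$. There is an exact sequence $0 \to (0:_M a) \to M \xrightarrow{a} M \to \overline{M} \to 0$; splitting off $W = \H_\m^0(M)$ (so $(0:_M a)\subseteq W$ for general $a$ and $W/aW = W$) and comparing, one obtains $\e_Q^2(M) \le \e_Q^2(M/W) + \ell_A(\H_\m^1(M))$-type bounds, but the cleaner route is to note that $\e_Q^i(M) = \e_Q^i(M/W)$ for $i \ge 1$ when $d\ge 2$, so we may assume $\depth_A M > 0$, hence $a$ is $M$-regular and $\e_Q^2(M) = \e_Q^2(\overline{M}) + (\text{term from } \H_\m^1(M), \ldots, \H_\m^{d-1}(M))$. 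The key identity I expect to need, and which must be extracted carefully from the literature cited (or reproved), is that the "correction term" is exactly governed by $\sum_j \binom{\cdots}{\cdots}\ell_A(\H_\m^j(M))$-like data bounded by the $\hdeg_Q(M_j)$ for $j\ge 2$. Granting $\e_Q^2(\overline{M}) \le \rmT_Q^2(\overline{M}) \le \rmT_Q^2(M)$ by induction and Lemma~\ref{superficial2}, and checking the Pascal-type identity relating the binomial coefficients in $\rmT_Q^2(M)$ for dimension $d$ to those for dimension $d-1$ plus the correction, part (1) follows. The base case $d=2$ is Corollary~\ref{d=2'}, since $\rmT_Q^2$ is not defined at $d=2$ but the inductive step lands at $d=2$ with the right-hand side becoming $\ell_A(\H_\m^0(\overline{M}))$-type quantities, matching $\hdeg_Q(\overline{M}_0)=\ell_A(\H_\m^0(\overline{M}))$.

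For part (2), where $M$ is unmixed, the unmixedness is needed precisely to control $\H_\m^{d-1}(M)$: an unmixed module has $\dim_A M_{d-1} < d-1$, so that after cutting by one general superficial element $a$, the module $\overline{M}$ is again unmixed (a point that needs a short argument: $\Ass_A \overline{M}$ is controlled because $a$ avoids the minimal primes and the associated primes of the deficiency modules). I would again induct, with base case $d=2$ giving the lower bound $\ell_A(\H_\m^0(M)) - \ell_A(\H_\m^1(M)) = \hdeg_Q(M_0) - \hdeg_Q(M_1)$ from Corollary~\ref{d=2'}; note for unmixed $M$ with $d=2$ one has $M_1 = 0$ unless... actually $\dim_A M_1 < 1$ so $M_1$ has finite length, and the formula is consistent with $-\binom{-1}{0}\hdeg_Q(M_j)$ reading as the empty-ish sum. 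The inductive step uses $\e_Q^2(M) \ge \e_Q^2(\overline{M}) - (\text{correction})$ with the correction bounded above by $\sum_{j\ge 2}\binom{d-3}{j-2}\hdeg_Q(M_j)$ minus the analogous sum for $\overline{M}$, again a binomial bookkeeping identity.

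The main obstacle, I expect, is pinning down the precise form of the "superficial reduction formula" for the \emph{second} Hilbert coefficient — i.e., the exact error term $\e_Q^2(M) - \e_Q^2(M/aM)$ in terms of the local cohomology modules $\H_\m^j(M)$ — and then matching it, via the Vaconsumm-type identity $\hdeg_I(M) = \e_I^0(M) + \sum_{j=0}^{s-1}\binom{s-1}{j}\hdeg_I(M_j)$ and the Pascal recursion $\binom{s-1}{j} = \binom{s-2}{j} + \binom{s-2}{j-1}$, to the combinatorial coefficients appearing in $\rmT_Q^2$ and in the part-(2) bound. Everything else — choosing $a$ generic enough to trigger Lemmas~\ref{superficial1}--\ref{superficial3} at once, passing to $M/\H_\m^0(M)$, verifying unmixedness descends — is routine once that bridge identity is in hand.
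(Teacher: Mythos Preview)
Your outline for part~(1) is essentially the paper's argument, but you have introduced a phantom obstacle: for a superficial element $a$ with $\dim_A M \ge 3$ (and $a$ $M$-regular after passing to $M/\rmH_\fkm^0(M)$), one has $\e_Q^2(M) = \e_Q^2(\overline{M})$ on the nose, with no correction term. The ``bridge identity'' you worry about does not exist. Once you drop that, the induction is exactly as the paper does it: for $d\ge 4$ use Lemma~\ref{superficial2} to get $\e_Q^2(M)=\e_Q^2(\overline{M})\le \rmT_Q^2(\overline{M})\le \rmT_Q^2(M)$, and for $d=3$ cut down to $\dim 2$, apply Corollary~\ref{d=2'} to get $\e_Q^2(\overline{M})\le \ell_A(\rmH_\fkm^0(\overline{M}))=\hdeg_Q(\overline{M}_0)$, and identify $\overline{M}_0 \cong M_1/aM_1$ via the Matlis dual of $0\to \rmH_\fkm^0(\overline{M})\to \rmH_\fkm^1(M)\xrightarrow{a}\rmH_\fkm^1(M)$.

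Part~(2), however, has a genuine gap. Your claim that $\overline{M}=M/aM$ is again unmixed for generic $a$ is false. If $M$ is unmixed with $d\ge 2$ and $\rmH_\fkm^1(M)\ne 0$ (which is entirely possible---by Corollary~\ref{GNa} this module has finite length), then for \emph{every} $a\in\fkm$ the module $(0:_{\rmH_\fkm^1(M)}a)$ is nonzero, hence $\rmH_\fkm^0(\overline{M})\ne 0$ and $\overline{M}$ is mixed. So the inductive hypothesis cannot be applied to $\overline{M}$ directly. The paper's fix is to invoke Proposition~\ref{unmixed} to embed $M$ into a free $B$-module $F$ over a Gorenstein cover, set $X=F/M$, and after reducing mod $a$ work with $L=\operatorname{Im}(\overline{M}\to F/aF)$, which \emph{is} unmixed of dimension $d-1$. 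One then has $L\cong \overline{M}/\rmH_\fkm^0(\overline{M})$, so $\e_Q^2(L)=\e_Q^2(\overline{M})=\e_Q^2(M)$ and $\rmH_\fkm^j(L)\cong\rmH_\fkm^j(\overline{M})$ for $j\ge 1$; the inductive hypothesis applies to $L$. The remaining work is to bound $\hdeg_Q(\overline{M}_j)\le \hdeg_Q(M_j)+\hdeg_Q(M_{j+1})$ via the Matlis-dual short exact sequences $0\to M_{j+1}/aM_{j+1}\to\overline{M}_j\to (0:_{M_j}a)\to 0$ together with Lemma~\ref{xyz}, after which Pascal's identity gives the claimed bound. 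Your proposal contains the binomial bookkeeping but is missing the embedding device that makes the induction run.
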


We divide the proof of Theorem 3.3 into a few steps.
Let us begin with the following.

\begin{proof}[Proof of Theorem 3.3 (1)]
We proceed by induction on $d$.
Let $M' = M/\rmH_\m^0(M)$.
Then, since $\e_Q^2(M) = \e_Q^2(M')$ and $\rmT_Q^2(M) = \rmT_Q^2(M')$, to see that $\rme_Q^2(M) \leq \rmT_Q^2(M)$, we may assume, passing to $M'$, that $\depth_AM >0$. 
Suppose that $d=3$. Choose $a \in Q \setminus \fkm Q$ so that $a$ is superficial for $M$ and $M_1$ with respect to $Q$ and $\hdeg_Q(M_1 / aM_1) \le \hdeg_Q (M_1)$ (Lemma 2.4). Set $\overline{M} = M/aM$. Then since $a$ is $M$--regular, we get the exact sequence
$$ 0 \to \H_{\m}^0(\overline{M}) \to \H_{\m}^1(M) \overset{a}{\to} \H_{\m}^1(M) $$
of local cohomology modules. Taking the Matlis dual, we get an isomorphism $M_1/aM_1 \cong \overline{M}_0$ and hence, because $\e_Q^2(\overline{M}) \leq \ell_A(\H_{\m}^0(\overline{M}))$ by Corollary 3.2, we have
\begin{eqnarray*}
\e_Q^2(M) = \e_Q^2(\overline{M}) &\leq& \ell_A(\H_{\m}^0(\overline{M}))\\
&=& \hdeg_Q(\overline{M}_0)\\
&=& \hdeg_Q(M_1/aM_1)\\
&\leq& \hdeg_Q(M_1) = \rmT_Q^2(M).
\end{eqnarray*}
Suppose that $d \geq 4$ and that our assertion holds true for $d-1$. Choose $a \in Q \setminus \fkm Q$ so that $a$ is superficial for $M$ with respect to $Q$ and $\rmT_Q^2(\overline{M}) \le \rmT_Q^2(M)$ (Lemma 2.6). Then the hypothesis of induction on $d$ shows
$$\rme_Q^2(M)=\rme_Q^2(\overline{M}) \leq \rmT_Q^2(\overline{M}) \leq \rmT_Q^2(M),$$ 
as wanted.
\end{proof}

To prove Theorem 3.3 (2), we need the following.

\begin{prop}\label{unmixed}$(${\cite[Theorem 2.5]{GhGHOPV2}}$)$
Let $M$ be a finitely generated $A$-module with $d = \dim_AM$.
Suppose that $M$ is unmixed.
Then there exist a surjective homomorphism $B \to A$ of rings such that $B$ is a Gorenstein complete local ring with $\dim B=\dim A$ and an exact sequence
$$0 \to M \to F \to X \to 0$$
of $B$-modules with $F$ finitely generated and free.
\end{prop}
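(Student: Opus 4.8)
\emph{Strategy.} The plan is to present $M$ as a module over a complete intersection quotient of a regular ring and then use the unmixedness of $M$ to embed it into a finite free module over that quotient. Since $A$ is complete, Cohen's structure theorem supplies a surjection $S\twoheadrightarrow A$ with $(S,\fkn)$ a complete \emph{regular} local ring; write $A = S/\fka$ and $n = \dim S$. We may assume $\dim A = \dim_A M = d$ (otherwise replace $A$ by $A/(0:_A M)$, which by unmixedness has dimension $d$). As $S$ is Cohen--Macaulay, $\grade(\fka,S)=\height\fka = n-d$, so one can choose an $S$--regular sequence $x_1,\dots,x_{n-d}\in\fka$ and put $B = S/(x_1,\dots,x_{n-d})$. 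Then $B$ is a complete intersection, hence a Gorenstein complete local ring with $\dim B = d = \dim A$, and since $(x_1,\dots,x_{n-d})\subseteq\fka$ the surjection factors as $S\twoheadrightarrow B\twoheadrightarrow A$; in particular $M$ becomes a $B$--module.

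\emph{The embedding.} The remaining point is that $M$ sits inside a finite free $B$--module, i.e.\ that $M$ is torsionless over $B$. Because $B$ is Gorenstein, this holds as soon as $\Ass_B M\subseteq\Ass_B B$ --- this is the first-syzygy criterion over a Cohen--Macaulay ring, and can also be seen directly: the kernel of $M\to M^{\ast\ast}$ (double $B$--dual) vanishes after localizing at any minimal prime of $B$, where $B$ is Gorenstein Artinian, hence self-injective, so that kernel is zero. To verify the inclusion I would work over $S$. Let $\Ass_S M = \{\fkp_1,\dots,\fkp_r\}$. Unmixedness of $M$ over $A$ together with $S$ Cohen--Macaulay (so that $\height\fkp+\dim S/\fkp = n$) gives $\height_S\fkp_i = n-d$ for every $i$. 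On the other hand $\fkp_i\supseteq\Ann_S M\supseteq\fka\supseteq(x_1,\dots,x_{n-d})$, and the ideal $(x_1,\dots,x_{n-d})$ has height $n-d$ (being generated by a regular sequence of that length); hence $\fkp_i$ is minimal over $(x_1,\dots,x_{n-d})$, so $\fkp_i/(x_1,\dots,x_{n-d})$ is a minimal prime of $B$. Since $x_1,\dots,x_{n-d}$ is $S$--regular, $B$ is Cohen--Macaulay and $\Ass_B B = \Min B$, while $\Ass_B M = \{\,\fkp_i/(x_1,\dots,x_{n-d})\,\}_{1\le i\le r}$. Thus $\Ass_B M\subseteq\Min B = \Ass_B B$, and $M$ embeds in some $F = B^{\oplus k}$; set $X = F/M$.

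\emph{Main obstacle.} The decisive step is the inclusion $\Ass_B M\subseteq\Ass_B B$, and this is exactly where unmixedness is unavoidable: if $M$ had an associated prime of coheight $<d$, that prime could never be associated to the Cohen--Macaulay ring $B$, so no embedding of $M$ into a free $B$--module could exist --- which is why Theorem~\ref{e2}(2) genuinely needs the unmixed hypothesis. A secondary technical point is that the regular sequence must be taken inside $\fka$ while still cutting the dimension down to precisely $d$; this is possible because $A$ is equidimensional of dimension $d$ and $S$ is Cohen--Macaulay (so grade equals height), which is what the preliminary reduction to $\dim A = d$ secures. The rest --- that complete intersections are Gorenstein, that reduction modulo a regular sequence preserves Cohen--Macaulayness and the set of associated primes, and the torsionless criterion over a Gorenstein local ring --- is routine.
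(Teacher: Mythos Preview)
The paper does not supply its own proof of this proposition; it is simply quoted from \cite[Theorem~2.5]{GhGHOPV2} and used as a black box in the proofs of Theorem~\ref{e2}(2) and Theorem~\ref{main2}. So there is nothing in the present paper to compare against.

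That said, your argument is correct and is essentially the standard construction one expects to find in the cited reference. Presenting $A$ as a quotient of a regular local ring $S$ via Cohen's structure theorem, cutting down by a maximal $S$--regular sequence inside the kernel to reach a complete intersection $B$ of the right dimension, and then using unmixedness to force $\Ass_B M\subseteq \Min B=\Ass_B B$ is exactly the right line. Your verification of torsionlessness by localizing the double--dual map at a minimal prime of $B$ and invoking Matlis duality over the Artinian Gorenstein localization $B_\fkp$ is clean and self--contained; the named ``first--syzygy criterion'' you allude to is precisely this.

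One small remark on your preliminary reduction: replacing $A$ by $A/(0:_AM)$ changes the target of the surjection, so strictly speaking you are proving the statement with $A/(0:_AM)$ in place of $A$ and with $\dim B=\dim_A M$ rather than $\dim B=\dim A$. This is harmless here, because everywhere the proposition is invoked in the paper one has $\dim_A M=\dim A$ (the ideals $Q$ are parameter ideals of $A$ and $M$ simultaneously), and the only use made of the conclusion is to replace $A$ by $B$ anyway. But it is worth noting that the literal statement with $\dim B=\dim A$ can fail when $\dim_A M<\dim A$, for exactly the reason your own argument reveals: no associated prime of $M$ could then be minimal in $B$.
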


Thanks to Proposition 3.4, we get the following.

\begin{cor}\label{GNa}$(${\cite[Lemma 3.1]{GNa}}$)$
Let $M$ be a finitely generated $A$-module with $d = \dim_AM\geq 2$. If $M$ is unmixed, then $\H_{\m}^1(M)$ is finitely generated.
\end{cor}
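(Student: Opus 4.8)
The plan is to derive the finiteness of $\H_\m^1(M)$ from the structure theorem recalled in Proposition 3.4. Since $M$ is unmixed, that proposition furnishes a surjective ring homomorphism $B \to A$ with $B$ a Gorenstein complete local ring, $\dim B = \dim A = d$, together with a short exact sequence
$$0 \to M \to F \to X \to 0$$
of $B$-modules in which $F$ is finitely generated and free. Here $M$ and $X = F/M$ are regarded as $B$-modules via $B \to A$; in particular $X$, being a quotient of $F$, is a finitely generated $B$-module.

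Let $\n$ denote the maximal ideal of $B$. Because $M$ (and $X$) are annihilated by $\ker(B \to A)$ and $\m$ is generated by the image of $\n$, the independence of the base ring for local cohomology gives $\H_\m^j(M) \cong \H_\n^j(M)$ for all $j$, and likewise for $X$. Now $F$ is free over the Cohen--Macaulay (indeed Gorenstein) ring $B$ of dimension $d \ge 2$, so $\depth_B F = d \ge 2$ and hence $\H_\n^0(F) = \H_\n^1(F) = (0)$. Passing to the long exact sequence of local cohomology attached to the displayed short exact sequence, these vanishings yield an isomorphism
$$\H_\n^1(M) \cong \H_\n^0(X).$$

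Finally, $\H_\n^0(X) = \{\, x \in X \mid \n^{\ell} x = (0) \text{ for some } \ell > 0 \,\}$ is a submodule of the finitely generated module $X$ over the Noetherian ring $B$, hence is itself finitely generated. Combining with the isomorphisms above, $\H_\m^1(M) \cong \H_\n^1(M) \cong \H_\n^0(X)$ is a finitely generated $B$-module, and therefore finitely generated as an $A$-module. There is no genuine obstacle once Proposition 3.4 is available; the two points to keep in mind are that the hypothesis $d \ge 2$ is exactly what forces $\depth_B F \ge 2$, and that replacing $A$ by the Gorenstein ring $B$ does not alter the local cohomology of $M$.
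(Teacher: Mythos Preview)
Your proof is correct and follows exactly the route the paper intends. The paper does not spell out the argument for this corollary beyond the phrase ``Thanks to Proposition~3.4,'' but the intended derivation is precisely the one you give: from the exact sequence $0 \to M \to F \to X \to 0$ over the Gorenstein ring $B$ with $\dim B = \dim A \ge \dim_A M \ge 2$, the vanishing $\H_\n^0(F) = \H_\n^1(F) = 0$ forces $\H_\m^1(M) \cong \H_\n^1(M) \cong \H_\n^0(X)$, which is finitely generated as a submodule of $X$. (The same isomorphism is invoked explicitly later in the paper, in the proof of Proposition~4.6.)
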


Let $\Assh_AM=\{\p \in \Ass_AM \ | \ \dim A/\p=\dim_A M \}$ and let $(0)=\bigcap_{\p \in \Ass_AM}\operatorname{I}(\p)$ be a primary decomposition of $(0)$ in $M$, where for each $\p \in \Ass_AM$, $\operatorname{I}(\p)$ denotes a $\p$-primary submodule of $M$.
Set
$$\rmU_M(0)=\bigcap_{\p \in \Assh_AM}\operatorname{I}(\p)$$
and call it the unmixed component of $(0)$ in $M$.

We are now in a position to prove Theorem 3.3 (2).

\begin{proof}[Proof of Theorem 3.3 (2)]
By Proposition 3.4 we may assume that $A$ is a Gorenstein local ring and that there exists an exact sequence
$$ 0 \to M \overset{\varphi}{\to} F \to X \to 0 \ \ \ \ \ (\sharp)$$
of $A$-modules with $F$ a finitely generated free $A$-module and $X=\Coker \varphi$.

Since the residue class field $A/\m$ of $A$ is infinite, we may now choose an element $a \in Q \backslash \m Q$ so that $a$ is superficial for $M$, $F$, $X$, and $M_j$ with respect to $Q$ and $\hdeg_Q(M_j/aM_j) \leq \hdeg_Q(M_j)$ for all $2 \leq j \leq d-1$ (Lemma 2.4).
We set $\overline{M}=M/aM$.

We consider the exact sequence
$$ 0 \to [(0):_X a] \to \overline{M} \overset{\overline{\varphi}}{\to} F/aF \to X/aX \to 0$$
of $A$-modules obtained by exact sequence $(\sharp)$, where $\overline{\varphi}=A/(a) \otimes \varphi$.
Set $L=\Im \overline{\varphi}$.
Then since $L$ is unmixed with $\dim_A L=d-1$ and $\ell_A([(0):_X a])< \infty$, we get $$[(0):_X a] \cong \H_{\m}^0(\overline{M}) \cong \rmU_{\overline{M}}(0)$$ where $U=\rmU_{\overline{M}}(0)$ denotes the unmixed component of $(0)$ in $\overline{M}$.
Hence $\hdeg_Q(\overline{M}_j)=\hdeg_Q(L_j)$ for all $1 \leq j \leq d-2$.

We proceed by induction on $d$.
Suppose that $d=3$.
Consider the long exact sequence
$$ 0 \to \H_{\m}^0(\overline{M}) \to \H_{\m}^1(M) \overset{a}{\to} \H_{\m}^1(M) \to \H_{\m}^1(\overline{M}) \to \H_{\m}^2(M) \overset{a}{\to} \H_{\m}^2(M) $$
of local cohomology modules induced by the exact sequence
$$ 0 \to M \overset{a}{\to} M \to \overline{M} \to 0.$$
Because $L \cong \overline{M}/U$, $\ell_A(U) < \infty$, and $L$ is unmixed, $\H_{\m}^1(\overline{M}) \cong \H_{\m}^1(L)$ is finitely generated by Corollary 3.5.
Then, taking the Matlis dual of the above long exact sequence, we get exact sequences
$$ 0 \to M_2/aM_2 \to \overline{M}_1 \to [(0):_{M_1}a] \to 0 $$
and
$$ 0 \to [(0):_{M_1}a] \to M_1 \overset{a}{\to} M_1 \to \overline{M}_0 \to 0. $$Hence we have $$\ell_A(\H_{\m}^1(\overline{M}))=\ell_A(\overline{M}_1) = \ell_A([(0):_{M_1}a])+\ell_A(M_2/aM_2)$$ and $\ell_A(\overline{M}_0)=\ell_A([(0):_{M_1}a])$ because $\H_{\m}^1(M)$ is finitely generated by Corollary 3.5.
Therefore, by Corollary 3.2, we get
\begin{eqnarray*}
\e_Q^2(M)=\e_Q^2(\overline{M})&\geq&\ell_A(\H_{\m}^0(\overline{M}))-\ell_A(\H_{\m}^1(\overline{M}))\\
&=& \ell_A(\overline{M}_0)-\{\ell_A([(0):_{M_1}a])+\ell_A(M_2/aM_2)\}\\
%&\geq& -\hdeg_Q(L_1)+\ell_A(U)\\
%&=& -\hdeg_Q(\overline{M}_1)+\ell_A(\overline{M}_0)\\
%&\geq& -\{\ell_A([(0):_{M_1}a])+\ell_A(M_2/aM_2)\}+\ell_A([(0):_{M_1} a])\\
&=& -\hdeg_Q(M_2/aM_2)\\
&\geq& -\hdeg_Q(M_2)
\end{eqnarray*}
as required.

Suppose that $d \geq 4$ and that our assertion holds true for $d-1$.
Consider the exact sequences
$$ \H_{\m}^j(M) \overset{a}{\to}  \H_{\m}^j(M) \to \H_{\m}^j(\overline{M}) \to \H_{\m}^{j+1}(M) \overset{a}{\to} \H_{\m}^{j+1}(M)$$
of local cohomology modules for $2 \leq j \leq d-2$ induced by the exact sequence
$$ 0 \to M \overset{a}{\to} M \to \overline{M} \to 0.$$
Then, taking the Matlis dual of the above exact sequences, we get exact sequences
$$ 0 \to M_{j+1}/aM_{j+1} \to \overline{M}_j \to [(0):_{M_j}a] \to 0 $$
and embeddings
$$ 0 \to [(0):_{M_j}a] \to M_j $$
for all $2 \leq j \leq d-2$.
Then, by Lemma 2.3, we have
\begin{eqnarray*}
\hdeg_Q(\overline{M}_j) &\leq& \hdeg_Q([(0):_{M_j}a])+\hdeg_Q(M_{j+1}/aM_{j+1})\\
&\leq& \hdeg_Q(M_j)+\hdeg_Q(M_{j+1})
\end{eqnarray*}
for all $2 \leq j \leq d-2$.
Because $a$ is superficial for $M$ with respect to $Q$ and $L \cong \overline{M}/U$ with $\ell_A(U) < \infty$, we see $\e_Q^2(M)=\e_Q^2(\overline{M})=\e_Q^2(L)$, and $\H_{\m}^j(\overline{M}) \cong \H_{\m}^j(L)$ for all $j \geq 1$.
Consequently, by the hypothesis of induction on $d$, we get
\begin{eqnarray*}
\e_Q^2(M)=\e_Q^2(\overline{M})&=&\e_Q^2(L)\\
&\geq& -\sum_{j=2}^{d-2}\binom{d-4}{d-2}\hdeg_Q(L_j)\\
&\geq& -\sum_{j=2}^{d-2}\binom{d-4}{d-2}\hdeg_Q(\overline{M}_j)\\
&\geq& -\sum_{j=2}^{d-2}\binom{d-4}{d-2}\{\hdeg_Q(M_j)+\hdeg_Q(M_{j+1})\}\\
&=& -\sum_{j=2}^{d-1}\binom{d-3}{d-2}\hdeg_Q(M_j)
\end{eqnarray*}
as required.
This completes the proof of Theorem 3.3.
\end{proof}

As a direct consequence of Theorem 3.3 we have the following, where, for an ideal $J$ in $A$, $\overline{J}$ denotes the integral closure of $J$.

\begin{cor}\label{finite}
Let $M$ be a finitely generated $A$-module with $d=\dim_A M \ge 2$ and suppose that $M$ is unmixed.
Let $Q$ be a parameter ideal in $A$. 
Then, the set
$$
\Lambda_Q^2(M)=\{\rme_\fkq^2(M) \mid \mbox{$\fkq$ is a parameter ideal in $A$ such that $\overline{\fkq} = \overline{Q}$}\}
$$
is finite.
\end{cor}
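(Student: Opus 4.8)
The plan is to reduce the finiteness of $\Lambda_Q^2(M)$ to the two-sided bounds supplied by Theorem 3.3, after disposing of the low-dimensional cases $d=2$ and handling the dependence on the integral closure. First I would fix a parameter ideal $\fkq$ with $\overline{\fkq}=\overline{Q}$ and recall that, by Fact 2.2(3), the homological degrees $\hdeg_{\fkq}(M_j)$ depend only on $\overline{\fkq}=\overline{Q}$; hence $\hdeg_{\fkq}(M_j)=\hdeg_Q(M_j)$ for every $j$, and in particular the quantities $\rmT_{\fkq}^2(M)$ and $\sum_{j=2}^{d-1}\binom{d-3}{j-2}\hdeg_{\fkq}(M_j)$ appearing in Theorem 3.3 are equal to the corresponding quantities for $Q$. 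Thus the bounds on $\rme_{\fkq}^2(M)$ do not vary with $\fkq$ inside the class $\overline{\fkq}=\overline{Q}$.

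Next I would treat the two cases separately. If $d=\dim_A M\ge 3$, then Theorem 3.3 applies directly (using that $M$ is unmixed for the lower bound) to give
$$-\sum_{j=2}^{d-1}\binom{d-3}{j-2}\hdeg_Q(M_j)\ \le\ \rme_{\fkq}^2(M)\ \le\ \rmT_Q^2(M)$$
for every $\fkq$ with $\overline{\fkq}=\overline{Q}$. Since both bounds are fixed integers independent of $\fkq$, the set $\Lambda_Q^2(M)$ is a subset of a fixed finite interval of $\Z$, hence finite. If $d=2$, Theorem 3.3 does not apply, but one instead invokes Corollary 3.2: for any parameter ideal $\fkq$ of $A$,
$$\ell_A(\H_{\m}^0(M))-\ell_A(\H_{\m}^1(M))\ \le\ \rme_{\fkq}^2(M)\ \le\ \ell_A(\H_{\m}^0(M)),$$
and here the hypothesis that $M$ is unmixed with $\dim_A M\ge 2$ guarantees $\H_{\m}^0(M)=(0)$ and, by Corollary 3.5, that $\H_{\m}^1(M)$ is finitely generated, hence of finite length since it is also $\m$-torsion; so the bounds are again fixed finite integers independent of $\fkq$, and $\Lambda_Q^2(M)$ is finite.

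The only mild subtlety — and the step I would be most careful about — is the passage between $\fkq$ and $\overline{\fkq}$: one must be sure that the estimates of Theorem 3.3 and Corollary 3.2, which are stated for an arbitrary parameter ideal, combine with the integral-closure invariance of $\hdeg$ (Fact 2.2(3)) to produce bounds that genuinely do not depend on the particular $\fkq$ in the class. Once this is observed, the argument is immediate: $\Lambda_Q^2(M)$ is contained in a finite set of integers, so it is finite. No further computation is needed.
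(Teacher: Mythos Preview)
Your proposal is correct and follows essentially the same approach as the paper: bound $\rme_{\fkq}^2(M)$ above and below by quantities depending only on $\overline{\fkq}=\overline{Q}$, using Theorem~3.3 for $d\ge 3$ and the two-dimensional estimate for $d=2$, together with the finiteness of $\ell_A(\rmH_{\fkm}^1(M))$ from Corollary~3.5. The only cosmetic difference is that the paper invokes Proposition~3.1 directly in the $d=2$ case (since $M$ unmixed forces $\depth_A M\ge 1$), whereas you go through Corollary~3.2 and then observe $\rmH_{\fkm}^0(M)=0$; the resulting bounds are identical.
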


\begin{proof}
For each parameter ideal $\q$ of $A$, $-\ell_A(\H_{\m}^1(M)) \leq \e_{\q}^2(M) \leq 0$ if $d=2$ by Proposition 3.1, and $$-\sum_{j=2}^{d-1}\binom{d-3}{j-2}\hdeg_{\q}(M_j) \leq \e_{\q}^2(M) \leq \rmT_{\q}^2(M)$$ if $d \geq 3$ by Theorem 3.3.
Therefore the result follows, because $\ell_A(\H_{\m}^1(M)) < \infty$ by Corollary 3.5 and $\rmT_{\q}^2(M)$, $\hdeg_{\q}(M_j)$ depend only on the integral closure $\overline{\q}$ of $\q$.
\end{proof}

The following example shows that, the inequality $\e_Q^2(M) \geq -\sum_{j=2}^{d-1}\binom{d-3}{j-2}\hdeg_Q(M_j)$ in Theorem 3.3 (2) does not hold true in general, unless $M$ is unmixed,

\begin{ex}\label{ex}
Let $S$ be a complete regular local ring with maximal ideal $\n$, $d=\dim S=3$, and infinite residue class field $S/\n$. 
Let $\fkn = (X,Y,Z)$ and $\ell \geq 1$ be integers.
We set
$$ A=S \ltimes S/(Z^{\ell}) $$
denotes the idealization of $S/(Z^{\ell})$ over $S$ and let $Q=\n A$.
Then we have the following.
\begin{itemize}
\item[(1)] $A$ is mixed with maximal ideal $\m=\n \times [S/(Z^{\ell})]$, $\dim A=3$, and $\depth A=2$,
\item[(2)] $\e_Q^0(A)=1$, $\e_Q^1(A)=-\ell$, $\e_Q^2(A)=-\binom{\ell}{2}$, and $\e_Q^3(A)=-\binom{\ell}{3}$,
\item[(3)] $\hdeg_Q(A_2)=\ell$.
\item[(4)] Hence $-\hdeg_Q(A_2) > \e_Q^2(A)$, if $\ell \geq 4$.
\end{itemize}
\end{ex}

\begin{proof}
We set $\overline{S}=S/(Z^{\ell})$.
Since $S$ is a Regular local ring with $\dim S=3$, we have
\begin{eqnarray*}
\ell_A(A/Q^{n +1})  &=& \ell_S(S/\fkn^{n +1}) + \ell_S(\overline{S}/\fkn^{n+1}\overline{S})\\
&=& \binom{n +3}{3} + \left\{\rme_\fkn^0(\overline{S}) \binom{n +2}{2} -\rme_\fkn^1(\overline{S}) \binom{n +1}{1} + \rme_\fkn^2(\overline{S})\right\}
\end{eqnarray*}
for all $n \gg 0$

Because the Hilbert series $\H(\gr_{\n}(\overline{S}), \lambda)$ of the associated graded ring $\gr_{\n}(\overline{S})$ is given by
$$
\rmH(\gr_\n(\overline{S}) , \lambda) = \frac{1+\lambda + \cdots +\lambda^{\ell-1}}{(1-\lambda)^2},
$$
we get $\rme_\fkn^0(\overline{S}) = \ell$, $\rme_\fkn^1(\overline{S}) = \binom{\ell}{2}$, and $\rme_\fkn^2(\overline{S}) = \binom{\ell}{3}$. 
Therefore

$$
(-1)^i\e_Q^i(A) = \left\{
\begin{array}{lc}
1 & \mbox{if $i = 0$},\\
%\vspace{1mm}\\
\rme_\fkn^0(\overline{S}) = \ell & \mbox{if $i = 1$},\\
%\vspace{1mm}\\
-\rme_\fkn^1(\overline{S}) = -\binom{\ell}{2} & \mbox{if $i = 2$},\\
%\vspace{1mm}\\
\rme_\fkn^2(\overline{S}) = \binom{\ell}{3}& \mbox{if $i = 3$}.
\end{array}
\right.
$$

On the other hand, we have 
$$
\hdeg_Q(A_2) = \hdeg_Q(\overline{S}) = \rme_{Q}^0(\overline{S})=\e_{\n}^0(\overline{S}) = \ell,
$$ because $\overline{S}$ is a Gorenstein ring and $\rmH_\fkm^2(A) \cong {}_p[\rmH_\fkn^2(\overline{S})]$ (here $p : A \to \overline{S}, p(a,x) = x$ denotes the projection).
\end{proof}

We close this section with the following example of parameter ideals $Q$ such that $\e_Q^2(M)=\rmT_{Q}^2(M)$ but $A$ is not a generalized Cohen-Macaulay ring.

\begin{ex}\label{ex1}$($\cite[Example 3.10]{GO2}$)$
Let $\ell \geq 2$ and $m \geq 1$ be integers.
Let $$S=k[[X_i,Y_i,Z_j \ | \ 1 \leq i \leq \ell, 1 \leq j \leq m]]$$
be the formal power series ring with $2\ell+m$ indeterminates over an infinite field $k$ and set $\a=(X_1,X_2,\ldots,X_{\ell})S$, $\b=(Y_1,Y_2,\ldots,Y_{\ell})S$. Let $$A=S/\a \cap \b,$$
$$\m=(x_i,y_i,z_j \ | \ 1 \leq i \leq \ell, 1 \leq j \leq m)A, ~~\text{and}$$ 
$$Q=(x_i-y_i \ | \ 1 \leq i \leq \ell)A+(z_j \ | \ 1 \leq j \leq m)A,$$
where $x_i$, $y_i$, and $z_j$ denote the images of $X_i$, $Y_i$, and $Z_j$ in $A$ respectively.
Then  $\m^2=Q\m$, whence $Q$ is a reduction of $\m$. We furthermore have the following:
\begin{itemize}
\item[$(1)$] $A$ is an unmixed local ring with $\dim A=\ell+m$, $\depth A=m+1$, and $\H_{\m}^{m+1}(A)$ is not  finitely generated.
\item[$(2)$] $\e_Q^2(A)=0$ if $\ell =2$, and $\e_Q^2(A)=1$ if $\ell \geq 3$.
\item[$(3)$] $\rmT_Q^2(A)=\binom{\ell+m-3}{m}$.
\item[$(4)$] Hence $\e_Q^2(A)=\rmT_Q^2(A)$, if $\ell=2, 3$, but $\e_Q^2(A) < \rmT_Q^2(A)$ if $\ell \geq 4$.
\end{itemize}
\end{ex}

\begin{proof}
Consider the exact sequence
$$ 0 \to A \to S/\a \times S/\b \to S/[\a+\b] \to 0$$
of $S$-modules. 
Then because $$S/\a \cong k[[Y_i, Z_j \ | \ 1 \leq i \leq \ell, 1 \leq j \leq m]],$$ 
$$S/\b \cong k[[X_i, Z_j \ | \ 1 \leq i \leq \ell, 1 \leq j \leq m]], \ \ \ \mbox{and}$$ 
$$S/[\a+\b] \cong k[[Z_j \ | \ 1 \leq j \leq m]],$$ 
we get $\dim A=\ell+m$, $\H_{\m}^{m+1}(A) \cong \H_{\m}^m(S/[\a+\b])$, and $\H_{\m}^j(A)=0$ for all $j \neq m+1$, $\ell+m$.
Hence we have $$\hdeg_Q(A_{m+1})=\hdeg_Q(S/[\a+\b])=\e_Q^0(S/[\a+\b])=\e_{\m}^0(S/[\a+\b])=1$$ 
and $\hdeg_Q(A_j)=0$ for all $0 \leq j \leq \ell+m-1$ such that  $j \neq m+1$.
Therefore we get
$$ \rmT_Q^2(A)=\sum_{j=1}^{\ell+m-2}\binom{\ell+m-3}{j-1}\hdeg_Q(A_j)=\binom{\ell+m-3}{m}.$$

On the other hand, we set $$B=S'/\a' \cap \b'$$ and $Q_0=(x_i-y_i \ | \ 1 \leq i \leq \ell)B$ where $S'=k[[X_i,Y_i \ | \ 1 \leq i \leq \ell]]$ be the formal power series ring, $\a'=(X_1,X_2,\cdots,X_{\ell})S'$, and $\b'=(Y_1,Y_2,\cdots,Y_{\ell})S'$.
Then we have $A=B[[Z_j \ | \ 1 \leq j \leq m]]$ and $Q=Q_0A+(z_j \ | \ 1 \leq j \leq m)A$.
Recall that $\gr_Q(A)=\gr_{Q_0}(B)[W_1',W_2',\cdots,W_m']$ forms the polynomial ring, where $W_j$'s are the initial forms of $z_j$'s.
Therefore $z_1,z_2,\cdots,z_m$ forms a superficial sequence for $A$ with respect to $Q$ so that we have $$\e_Q^2(A)=\e_{Q_0}^2(B)=\left\{
\begin{array}{lc}
0 & \mbox{if $\ell = 2$},\\
%\vspace{1mm}\\
1 & \mbox{if $\ell \geq 3$},
\end{array}
\right.$$
because $B$ is a Buchsbaum ring with $\H_{\n}^1(B) \cong k$ and $\H_{\n}^i(B)=(0)$ for all $i \neq 1, \ell$ (\cite[Proposition 2.7]{SV}).
\end{proof}

\section{Relationship between the second Hilbert coefficients and the homological torsion of parameters}

The second Hilbert coefficients $\e_Q^2(M)$ of parameter ideals are bounded above by the homological torsion $\rmT_Q^2(M)$. 
It is now natural to ask what happens on the parameters $Q$ of $M$, once the equality $\e_Q^2(M)=\rmT_Q^2(M)$ is attained. 
Let $$\g_s(Q;M)=\ell_A(M/QM)-\e_Q^0(M)+\e_Q^1(M)$$ denotes the sectional genus of $M$ with respect to $Q$.
Then the main result of this section answers the question and is stated as follows.

\begin{thm}\label{main2}
Let $M$ be a finitely generated $A$-module with $d = \dim_AM \geq 3$ and suppose that $M$ is unmixed. 
Let $Q$ be a parameter ideal of $A$. 
Then the following conditions are equivalent:
\begin{itemize}
\item[$(1)$] $\g_s(Q;M)=\hdeg_Q(M)-\e_Q^0(M)-\rmT_Q^1(M)$,
\item[$(2)$] $\e_Q^2(M)=\rmT_Q^2(M)$.
\end{itemize}
When this is the case, we have the following$\mathrm{:}$
\begin{itemize}
\item[$(\mathrm{i})$] $(-1)^i\e_Q^i(M)=\rmT_Q^i(M)$ for $3 \leq i\leq d-1$ and $\e_Q^d(M)=0$,
\item[$(\mathrm{ii})$] $\ell_A(M/Q^{n+1}M)=\sum_{i=0}^d(-1)^i\e_Q^i(M)\binom{n+d-i}{d-i}$ for all $n \geq 0$,
\item[$(\mathrm{ii})$] there exist elements $a_1,a_2,\ldots,a_d \in A$ such that $Q=(a_1,a_2,\ldots,a_d)$ and $a_1,a_2,\ldots,a_d$ forms a $d$-sequence on $M$, and
\item[$(\mathrm{iii})$] $Q\H_{\m}^i(M)=(0)$ for all $1 \leq i \leq d-3$.
\end{itemize}
\end{thm}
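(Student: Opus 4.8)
The plan is to prove the equivalence by induction on $d$, using superficial-element reduction to bring everything down to the base case $d=3$, and then extracting the extra assertions (i)--(iv) as by-products of the reduction. First I would set up the standard reductions: since $\g_s$, $\e_Q^i$, $\hdeg_Q$, and $\rmT_Q^i$ are all unchanged upon passing from $M$ to $M/\H_\m^0(M)$, I may assume $\depth_A M > 0$; moreover by Proposition 3.4 I may assume $A$ is Gorenstein and that there is an exact sequence $0 \to M \overset{\varphi}{\to} F \to X \to 0$ with $F$ finitely generated free. Then I choose a single element $a \in Q \setminus \m Q$ that is simultaneously superficial for $M$, $F$, $X$, all the $M_j$, and that satisfies all the inequalities from Lemmas 2.4, 2.6, 2.7 (equivalently Lemma 2.8) --- in particular $\hdeg_Q(M_j/aM_j) \le \hdeg_Q(M_j)$, $\rmT_Q^2(\overline M) \le \rmT_Q^2(M)$, and $\hdeg_Q(\overline M) - \rmT_Q^1(\overline M) \le \hdeg_Q(M) - \rmT_Q^1(M)$, where $\overline M = M/aM$. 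As in the proof of Theorem 3.3(2), one then has $\e_Q^i(M) = \e_Q^i(\overline M) = \e_Q^i(L)$ for $i \ge 1$, where $L = \overline M/\rmU_{\overline M}(0)$ is unmixed of dimension $d-1$, and $\H_\m^j(\overline M) \cong \H_\m^j(L)$ for $j \ge 1$; also $\g_s(Q;M) = \g_s(Q;\overline M)$ and $\hdeg_Q(\overline M) - \e_Q^0(\overline M) - \rmT_Q^1(\overline M) = \hdeg_Q(L) - \e_Q^0(L) - \rmT_Q^1(L) + \ell_A(\rmU_{\overline M}(0))$ up to a careful bookkeeping that I would do explicitly.

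The core of the argument is the chain of inequalities relating the two sides. On the one hand, the bound from \cite{GO1} (which underlies Lemma 2.7) gives $\g_s(Q;M) \le \hdeg_Q(M) - \e_Q^0(M) - \rmT_Q^1(M)$, and on the other hand Theorem 3.3(1) gives $\e_Q^2(M) \le \rmT_Q^2(M)$. I would show that both inequalities degrade in a controlled way under the reduction by $a$: the "defect" $\bigl(\hdeg_Q(M) - \e_Q^0(M) - \rmT_Q^1(M)\bigr) - \g_s(Q;M)$ and the "defect" $\rmT_Q^2(M) - \e_Q^2(M)$ are each bounded below by the corresponding defect for $L$ (or $\overline M$), with equality forcing $\hdeg_Q(M_j/aM_j) = \hdeg_Q(M_j)$ for the relevant $j$, forcing $[(0):_{M_1} a] = 0$ hence $\H_\m^1(M)$ torsion-free over the relevant subring, and so on. In the base case $d = 3$, the two equalities should both reduce, via the Matlis-dual exact sequences already displayed in the proof of Theorem 3.3(2) together with Proposition 3.1 applied to the two-dimensional module $\overline M$ (or $L$), to the single statement $\e_{Q}^2(\overline M) = \ell_A(\H_\m^0(\overline M)) - \ell_A(\H_\m^1(\overline M))$ combined with the $d=2$ genus identity $\g_s(Q;\overline M) = 0 \iff \e_{Q}^2(\overline M) = 0$; tracking the equality cases of Corollary 3.2 and Proposition 3.1 should make $(1) \iff (2)$ fall out in dimension three.

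Once the equivalence is in hand, the supplementary conclusions come by unwinding the equality cases. For (iii): in dimension $3$, equality in Proposition 3.1 together with the $d$-sequence characterization there gives that $Q$ is generated by a $d$-sequence on the relevant two-dimensional reduction; lifting this through the superficial reductions (using that $a$ was superficial and that $d$-sequences behave well under such specializations, cf.\ \cite[Proposition 3.7]{GO2}) yields $a_1, \ldots, a_d$ forming a $d$-sequence on $M$ with $Q = (a_1, \ldots, a_d)$. For (i) and (ii): once $Q$ is generated by a $d$-sequence on the unmixed module $M$, the Hilbert function of $M$ with respect to $Q$ is computed term-by-term --- this is essentially the content of \cite[Proposition 3.7]{GO2} --- giving $\ell_A(M/Q^{n+1}M) = \sum_{i=0}^d (-1)^i \e_Q^i(M)\binom{n+d-i}{d-i}$ for all $n \ge 0$, and then reading off the coefficients against the definitions of $\hdeg_Q$ and $\rmT_Q^i$ (inductively, since at each superficial step the relevant $M_j$ satisfy $M_j/aM_j \cong \overline M_{j-1}$ up to finite length with no loss) yields $(-1)^i \e_Q^i(M) = \rmT_Q^i(M)$ for $3 \le i \le d-1$ and $\e_Q^d(M) = 0$. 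For (iv): the equalities $\hdeg_Q(M_j/aM_j) = \hdeg_Q(M_j)$ forced above, for $j$ in the range $1 \le j \le d-3$, combined with Lemma 2.4(2)'s equality case, should translate into $a \H_\m^j(M) = 0$; since this holds for a suitably generic $a$ spanning $Q$ modulo $\m Q$, one gets $Q\H_\m^j(M) = (0)$ for $1 \le j \le d-3$.

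I expect the main obstacle to be the equality-case analysis in the base case $d = 3$: one must show that the two a priori independent inequalities $\g_s(Q;M) \le \hdeg_Q(M) - \e_Q^0(M) - \rmT_Q^1(M)$ and $\e_Q^2(M) \le \rmT_Q^2(M)$ become equalities \emph{simultaneously}, which requires relating the genus defect and the $\e^2$-defect through the Matlis duals of the local cohomology long exact sequence and pinning down exactly which of the several "$\le$" steps (from Corollaries 3.2, Proposition 3.1, and the length counts $\ell_A(\overline M_1) = \ell_A([(0):_{M_1}a]) + \ell_A(M_2/aM_2)$) are tight. A secondary technical point is verifying that the extra term $\ell_A(\rmU_{\overline M}(0)) = \ell_A([(0):_X a])$ cancels consistently on both sides of the putative equality as one passes between $\overline M$ and $L$; this is bookkeeping but needs care because $\hdeg$ and $\rmT^1$ both change by this term in a way that must match the change in $\ell_A(M/QM)$ and $\e_Q^1$.
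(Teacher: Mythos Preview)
Your overall architecture (induction on $d$, superficial reduction, tracking equality cases in the chain of inequalities) matches the paper's, and your identification of the implication $(1)\Rightarrow(2)$ as following from Theorem~4.2 (the paper's \cite[Theorem 1.3]{GO1}) is correct. The supplementary conclusions (i)--(iv) also follow immediately from that theorem once $(1)$ is established, so your separate arguments for them are unnecessary though not wrong.

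There is, however, a genuine gap in your plan for $(2)\Rightarrow(1)$. You describe the passage from $L=\overline M/\rmU_{\overline M}(0)$ back to $\overline M$ as ``bookkeeping'' concerning the term $\ell_A(\rmU_{\overline M}(0))$. It is not. By Lemma~4.4, the equality $\g_s(\overline Q;\overline M)=\hdeg_Q(\overline M)-\e_Q^0(\overline M)-\rmT_Q^1(\overline M)$ holds if and only if the corresponding equality holds for $L$ \emph{and} $Q\overline M\cap \rmH_\m^0(\overline M)=(0)$. The induction hypothesis (or the $d=3$ analysis via Proposition~3.1) gives you the first condition, but the intersection condition does not come for free from any equality-case analysis of the inequalities you list. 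A single superficial element $a$ is insufficient to establish it.

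The paper's mechanism for this step is Proposition~4.6, which requires choosing a \emph{second} superficial element $b\in Q\setminus\m Q$ (forming part of a minimal generating set with $a$), running the same reduction argument with $b$ in place of $a$ to obtain $b\rmH_\m^1(M)=(0)$ and a $d$-sequence $a,a_3,\ldots,a_d$ on $L'=M/\rmU(bM)$ (via Proposition~4.5 in the inductive step, or directly via Proposition~3.1 when $d=3$), and then invoking Proposition~4.6 to conclude $Q\overline M\cap\rmH_\m^0(\overline M)=(0)$. The proof of Proposition~4.6 itself is a nontrivial colon-ideal computation using the embedding $0\to M\to F$ and the vanishing $b\rmH_\m^1(M)=(0)$. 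Without this two-element argument, your induction cannot close.
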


In \cite[Theorem 3.1]{GO1}, we gave the characterization of parameter ideals $Q$ satisfying the equality $\g_s(Q;M)=\hdeg_Q(M)-\e_Q^0(M)-\rmT_Q^1(M)$ as follows.

\begin{thm}\label{genus}$($\cite[Theorem 1.3]{GO1}$)$
Let $M$ be a finitely generated $A$-module with $d = \dim_AM \geq 2$ and let $Q$ be a parameter ideal of $A$.
Then the following conditions are equivalent:
\begin{itemize}
\item[$(1)$] $\g_s(Q;M)=\hdeg_Q(M)-\e_Q^0(M)-\rmT_Q^1(M)$.
\item[$(2)$] The following two conditions are satisfied:
\begin{itemize}
\item[$(a)$]
$$
(-1)^i\e_Q^i(M) = \left\{
\begin{array}{cl}
\rmT_Q^i(M) & \mbox{if $2 \leq i \leq d-1$},\\
\vspace{0.5mm}\\
\ell_A(\H_{\m}^0(M)) & \mbox{if $i=d$}
\end{array}
\right.
$$
for all $2 \leq i \leq d$.
\item[$(b)$] $\ell_A(M/QM)-\sum_{i=0}^d(-1)^i\e_Q^i(M)=0.$
\end{itemize}
\end{itemize}
When this is the case, we have the following$\mathrm{:}$
\begin{itemize}
\item[$(\mathrm{i})$] there exist elements $a_1,a_2,\ldots,a_d \in A$ such that $Q=(a_1,a_2,\ldots,a_d)$ and $a_1,a_2,\ldots,a_d$ forms a $d$-sequence on $M$,
\item[$(\mathrm{ii})$] $$\ell_A(M/Q^{n+1}M)=\sum_{i=0}^d(-1)^i\e_Q^i(M)\binom{n+d-i}{d-i}$$ for all $n \geq 0$,
\item[$(\mathrm{iii})$] $QM \cap \H_{\m}^0(M)=(0)$, and $Q\H_{\m}^i(M)=(0)$ for all $1 \leq i \leq d-3$.
\end{itemize}
\end{thm}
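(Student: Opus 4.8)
The plan is to reduce the whole statement to the equivalence of $(1)$ and $(2)$, since the supplementary conclusions $(\mathrm i)$--$(\mathrm{iv})$ then come for free. Indeed, as $M$ is unmixed we have $\H_\m^0(M)=(0)$, so $\ell_A(\H_\m^0(M))=0$; under this vanishing the conclusions $(\mathrm i)$--$(\mathrm{iv})$ of the present theorem are exactly the list $(2\mathrm a)$ together with conclusions $(\mathrm i)$--$(\mathrm{iii})$ of Theorem~\ref{genus}. For $(1)\Rightarrow(2)$ I simply invoke Theorem~\ref{genus}: condition $(1)$ forces $(2\mathrm a)$, and since $d\ge 3$ the index $i=2$ lies in the range $2\le i\le d-1$, giving $\e_Q^2(M)=\rmT_Q^2(M)$.

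The real work is $(2)\Rightarrow(1)$, which I would carry out by induction on $d\ge 3$, arranged so that the hypothesis $\e_Q^2(M)=\rmT_Q^2(M)$ forces the inequality $\e_Q^2(M)\le\rmT_Q^2(M)$ of Theorem~\ref{e2}\,(1) to degenerate into a chain of equalities. Using Proposition~\ref{unmixed} I reduce to the case where $A$ is Gorenstein and there is an exact sequence $0\to M\overset{\varphi}{\to}F\to X\to 0$ with $F$ free; then via Lemmas~\ref{superficial1} and~\ref{superficial2} I choose a superficial $M$-regular element $a\in Q\setminus\m Q$ controlling $\rmT_Q^2$ and the relevant $\hdeg_Q(M_j)$. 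Writing $\overline M=M/aM$, I pass to its unmixed component $L=\overline M/U$, where $U=\rmU_{\overline M}(0)=\H_\m^0(\overline M)$ has finite length, so that $L$ is unmixed of dimension $d-1$ and $L_j\cong\overline M_j$ for all $j\ge 1$.

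In the inductive step $d\ge 4$ the point is that the reduction transports the extremal equality exactly. Since $U$ has finite length and $d-1\ge 3$ one has $\e_Q^2(\overline M)=\e_Q^2(L)$ and $\rmT_Q^2(\overline M)=\rmT_Q^2(L)$, while $a$ superficial $M$-regular gives $\e_Q^2(M)=\e_Q^2(\overline M)$; combining these with Lemma~\ref{superficial2} and with Theorem~\ref{e2}\,(1) applied to $L$ yields
\[
\rmT_Q^2(L)=\rmT_Q^2(\overline M)\le\rmT_Q^2(M)=\e_Q^2(M)=\e_Q^2(\overline M)=\e_Q^2(L)\le\rmT_Q^2(L),
\]
which must be an equality throughout. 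In particular $\e_Q^2(L)=\rmT_Q^2(L)$, so the induction hypothesis applies to the unmixed module $L$ and returns condition $(1)$ for $L$, hence, through Theorem~\ref{genus}, the full lists $(2\mathrm a)$, $(2\mathrm b)$ and the structural conclusions for $L$. It then remains to lift this information back to $M$: I would feed the sharpness of the chain into the Matlis duals of the local cohomology sequences of $0\to M\overset{a}{\to}M\to\overline M\to 0$, namely $0\to M_{j+1}/aM_{j+1}\to\overline M_j\to[(0):_{M_j}a]\to 0$, to force $[(0):_{M_j}a]=(0)$ and $\hdeg_Q(M_{j+1}/aM_{j+1})=\hdeg_Q(M_{j+1})$ for the relevant $j$, and thereby read off $\rmT_Q^i(M)$ and $\e_Q^i(M)$ from those of $L$ precisely enough to recover $(2\mathrm a)$ and $(2\mathrm b)$ for $M$; Theorem~\ref{genus} then gives $(1)$. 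The base case $d=3$ is handled separately through Corollary~\ref{d=2'}: the hypothesis $\e_Q^2(M)=\rmT_Q^2(M)=\hdeg_Q(M_1)$ forces equality in $\e_Q^2(\overline M)\le\ell_A(\H_\m^0(\overline M))=\hdeg_Q(M_1/aM_1)\le\hdeg_Q(M_1)$, whence by Proposition~\ref{d=2} the vanishing $\e_Q^2(\overline M/\H_\m^0(\overline M))=0$; combined with the identity $\g_s(Q;M)=\g_s(Q;\overline M)$ (valid because $a\in Q$ is $M$-regular, so $\e_Q^0$, $\e_Q^1$ and $\ell_A(M/QM)=\ell_A(\overline M/Q\overline M)$ are all preserved) and a length comparison across $0\to\H_\m^0(\overline M)\to\overline M\to\overline M/\H_\m^0(\overline M)\to 0$, this yields $\g_s(Q;M)=\hdeg_Q(M_1)$, which is exactly $(1)$ for $d=3$.

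I expect the lifting step to be the main obstacle. The difficulty is structural: $\overline M=M/aM$ is genuinely mixed, so one is forced through the finite-length module $U=\H_\m^0(\overline M)$, and across $0\to U\to\overline M\to L\to 0$ neither the sectional genus nor the length $\ell_A(\,\cdot/Q\,\cdot\,)$ transports without a $\Tor_1^A(A/Q,L)$ correction, while the dualized sequences introduce the torsion submodules $[(0):_{M_j}a]$ that must be shown to vanish. The crux is thus a rigidity phenomenon --- that extremality at the level of the single second coefficient $\e_Q^2(M)=\rmT_Q^2(M)$ propagates upward and forces all of these correction and torsion terms to vanish simultaneously --- and it is precisely here that the strong conclusion $(\mathrm{ii})$, that the Hilbert function of $M$ coincides with its Hilbert polynomial for \emph{all} $n\ge 0$, must be squeezed out.
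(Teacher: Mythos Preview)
Your proposal does not address the stated theorem. The statement you were asked about is Theorem~\ref{genus}, which in this paper is \emph{quoted} from \cite[Theorem~1.3]{GO1} and carries no proof here. Your write-up is instead a sketch of Theorem~\ref{main2}: you assume $M$ unmixed and $d\ge 3$ (neither is a hypothesis of Theorem~\ref{genus}), you refer to supplementary conclusions $(\mathrm i)$--$(\mathrm{iv})$ (Theorem~\ref{genus} has only $(\mathrm i)$--$(\mathrm{iii})$), and you repeatedly \emph{invoke} Theorem~\ref{genus} as a tool. As a proof of Theorem~\ref{genus} this is circular.

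Read as an attempt at Theorem~\ref{main2}, your architecture matches the paper's: reduce to $A$ Gorenstein via Proposition~\ref{unmixed}, induct on $d$, pass through a superficial $M$-regular $a$ to the unmixed quotient $L=\overline M/\H_\m^0(\overline M)$, and force the chain for $\e_Q^2\le\rmT_Q^2$ to be an equality. The genuine gap is the lifting step, and your proposed mechanism is not the one that works. A length comparison across $0\to W\to\overline M\to L\to 0$ with $W=\H_\m^0(\overline M)$ gives only
\[
\g_s(\overline Q;\overline M)=\g_s(\overline Q;L)+\ell_A\bigl(W/(W\cap Q\overline M)\bigr),
\]
so recovering $\g_s(Q;M)=\hdeg_Q(M)-\e_Q^0(M)-\rmT_Q^1(M)$ (via Lemma~\ref{/a} and Lemma~\ref{W}) demands precisely $Q\overline M\cap W=(0)$, which your sharpness argument does not supply. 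The paper obtains this through Proposition~\ref{cap}: one chooses a \emph{second} superficial element $b$, reruns the degeneration on $L'=(M/bM)/\H_\m^0(M/bM)$ to obtain $b\H_\m^1(M)=(0)$ and, via Proposition~\ref{d=2} (for $d=3$) or Proposition~\ref{d-seq1} (for $d\ge 4$), a $d$-sequence $a,a_3,\dots,a_d$ on $L'$; Proposition~\ref{cap} then yields $Q\overline M\cap W=(0)$. Your alternative --- forcing $[(0):_{M_j}a]=(0)$ from the equality $\rmT_Q^2(\overline M)=\rmT_Q^2(M)$ --- goes the wrong direction: the refined chain in the paper shows sharpness forces $aM_j=(0)$ for $1\le j\le d-3$ (so $[(0):_{M_j}a]=M_j$, not $(0)$), and even that information does not by itself close the $\g_s$-gap without the second superficial element and Proposition~\ref{cap}.
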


Thanks to Theorem 4.2, the implication $(1) \Rightarrow (2)$ and the last assertions follow.
Thus we need to show the implication $(2) \Rightarrow (1)$ for the proof of Theorem 4.1.

To prove Theorem 4.1, we need some results which are concerned to the sectional genera $\g_s(Q;M)$ as follows.
See \cite{GO1} for the detailed proofs.

\begin{lem}\label{/a}
Let $M$ be a finitely generated $A$-module with $s=\dim_AM \geq 3$.
Let $Q$ be a parameter ideal for $M$ and assume that $a \in Q \backslash \m Q$ is a superficial element for $M$ with respect to $Q$.
Then $\g_s(Q;M)=\g_s(\overline{Q};\overline{M})$, where $\overline{M}=M/aM$ and $\overline{Q}=Q/(a)$.
\end{lem}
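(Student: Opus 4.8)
The plan is to reduce the claimed equality of sectional genera to a comparison of the lower Hilbert coefficients under the superficial reduction. Recall that $\g_s(Q;M)=\ell_A(M/QM)-\e_Q^0(M)+\e_Q^1(M)$. First I would observe that $\overline{Q}\,\overline{M}$ is exactly the image of $QM$ in $\overline{M}=M/aM$, so that $\overline{M}/\overline{Q}\,\overline{M}\cong M/QM$ and hence $\ell_A(\overline{M}/\overline{Q}\,\overline{M})=\ell_A(M/QM)$. Thus the length term of the sectional genus is automatically unchanged, and it remains only to prove $\e_{\overline{Q}}^0(\overline{M})=\e_Q^0(M)$ and $\e_{\overline{Q}}^1(\overline{M})=\e_Q^1(M)$.

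For the coefficient comparison I would invoke the standard behaviour of Hilbert coefficients under a superficial element. Multiplication by $a$ gives, for each $n$, the exact sequence
$$0\to (Q^{n+1}M:_M a)/Q^nM \to M/Q^nM \xrightarrow{\ a\ } M/Q^{n+1}M \to \overline{M}/\overline{Q}^{\,n+1}\overline{M}\to 0,$$
whose cokernel is $\overline{M}/\overline{Q}^{\,n+1}\overline{M}$ since $\overline{Q}^{\,n+1}\overline{M}=(Q^{n+1}M+aM)/aM$. Taking lengths yields
$$\ell_A(\overline{M}/\overline{Q}^{\,n+1}\overline{M})=\ell_A(M/Q^{n+1}M)-\ell_A(M/Q^nM)+\ell_A\!\big((Q^{n+1}M:_M a)/Q^nM\big).$$
The key point is that, $a$ being superficial, the last length is eventually constant, equal to $\ell_A((0:_M a))$, which is finite because $a$ is a superficial parameter and hence avoids the associated primes of $M$ other than $\m$. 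Writing $P_M(n)=\ell_A(M/Q^{n+1}M)$ and comparing with its first difference $P_M(n)-P_M(n-1)$, which carries the coefficients $(-1)^i\e_Q^i(M)$ in the form appropriate to dimension $s-1$, I would read off $\e_{\overline{Q}}^i(\overline{M})=\e_Q^i(M)$ for all $0\le i\le s-2$, the stabilized constant perturbing only the top coefficient indexed by $i=s-1$.

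Finally, since $s=\dim_A M\ge 3$ we have $s-2\ge 1$, so both indices $i=0$ and $i=1$ lie in the safe range $0\le i\le s-2$; hence $\e_{\overline{Q}}^0(\overline{M})=\e_Q^0(M)$ and $\e_{\overline{Q}}^1(\overline{M})=\e_Q^1(M)$. Substituting these together with $\ell_A(\overline{M}/\overline{Q}\,\overline{M})=\ell_A(M/QM)$ into the definition of the sectional genus gives $\g_s(\overline{Q};\overline{M})=\g_s(Q;M)$, as desired. I expect the only point requiring care to be the stabilization of $\ell_A((Q^{n+1}M:_M a)/Q^nM)$, which is precisely where superficiality enters; and I would stress that the hypothesis $s\ge 3$ is exactly what guarantees that $\e^1$ (and not merely the multiplicity $\e^0$) is preserved, since for $s=2$ the first coefficient would be the top one and would absorb the correction term $\ell_A((0:_M a))$.
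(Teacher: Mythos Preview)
Your argument is correct and follows the standard route: the isomorphism $\overline{M}/\overline{Q}\,\overline{M}\cong M/QM$ handles the length term, and the well-known behaviour of Hilbert coefficients under a superficial reduction gives $\e_{\overline{Q}}^i(\overline{M})=\e_Q^i(M)$ for $0\le i\le s-2$, which covers $i=0,1$ since $s\ge 3$. The paper itself does not prove this lemma but simply refers to \cite[Lemma~3.2]{GO1}; your proof is essentially the expected argument behind that citation.

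One small point worth tightening: the stabilization $(Q^{n+1}M:_M a)/Q^nM\cong (0:_M a)$ for $n\gg 0$ follows from the superficiality in the form $aM\cap Q^{n+1}M=aQ^nM$ for large $n$, together with $(0:_M a)\cap Q^cM=0$ (which also yields $\ell_A((0:_M a))<\infty$ via the embedding $(0:_M a)\hookrightarrow M/Q^cM$). You gesture at this but it would be worth stating explicitly, since the bare definition of superficiality $(Q^{n+1}M:_M a)\cap Q^cM=Q^nM$ does not immediately give the equality $(Q^{n+1}M:_M a)=Q^nM+(0:_M a)$ without this extra step.
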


\begin{proof}
See \cite[Lemma 3.2]{GO1}.
\end{proof}

\begin{lem}\label{W}
Let $M$ be a finitely generated $A$-module with $d = \dim_AM \geq 2$ and let $Q$ be a parameter ideal of $A$.
Then $\g_s(Q;M)=\hdeg_Q(M)-\e_Q^0(M)-\rmT_Q^1(M)$ if and only if $\g_s(Q;M/\H_{\m}^0(M))=\hdeg_Q(M/\H_{\m}^0(M))-\e_Q^0(M/\H_{\m}^0(M))-\rmT_Q^1(M/\H_{\m}^0(M))$ and $QM \cap \H_{\m}^0(M)=(0)$.
\end{lem}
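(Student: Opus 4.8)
The plan is to set $W=\H_{\m}^0(M)$ and $M'=M/W$, so that $\ell_A(W)<\infty$, $\dim_AM'=d\ge 2$ and $\depth_AM'>0$, and to compare the four quantities $\g_s(Q;-)$, $\hdeg_Q(-)$, $\e_Q^0(-)$ and $\rmT_Q^1(-)$ across the exact sequence $0\to W\to M\to M'\to 0$. The key step is to reach the single additive identity
$$
\big(\hdeg_Q(M)-\e_Q^0(M)-\rmT_Q^1(M)\big)-\g_s(Q;M)
=\big(\hdeg_Q(M')-\e_Q^0(M')-\rmT_Q^1(M')\big)-\g_s(Q;M')+\ell_A(W\cap QM).
$$
Granting this, the equivalence is formal. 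For the ``if'' direction: if the asserted equality holds for $M'$ and $QM\cap W=(0)$, then both terms on the right-hand side vanish, hence the equality holds for $M$. For the ``only if'' direction: if the equality holds for $M$, then the concluding assertion $(\mathrm{iii})$ of Theorem \ref{genus}, applied to $M$, gives $QM\cap\H_{\m}^0(M)=(0)$, so $\ell_A(W\cap QM)=0$ and the remaining term on the right-hand side must vanish as well, i.e. the equality holds for $M'$.

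To obtain the identity I would assemble four comparisons along $0\to W\to M\to M'\to 0$. First, since $\ell_A(W)<\infty$, the Artin--Rees lemma gives $W\cap Q^{n+1}M=(0)$ for $n\gg 0$, so the Hilbert polynomials of $M$ and $M'$ with respect to $Q$ differ only by the constant $\ell_A(W)$; hence $\e_Q^i(M)=\e_Q^i(M')$ for $0\le i\le d-1$, in particular for $i=0$ and $i=1$. Second, applying $-\otimes_A A/Q$ to the sequence yields $\ell_A(M/QM)=\ell_A(M'/QM')+\ell_A\big(W/(W\cap QM)\big)$, whence $\g_s(Q;M)=\g_s(Q;M')+\ell_A\big(W/(W\cap QM)\big)$. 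Third, Lemma \ref{xyz}(2) applied to the sequence gives $\hdeg_Q(M)=\ell_A(W)+\hdeg_Q(M')$. Fourth, the long exact sequence of local cohomology, together with $\H_{\m}^j(W)=(0)$ for $j\ge 1$, yields $\H_{\m}^j(M)\cong\H_{\m}^j(M')$ for all $j\ge 1$ and $\H_{\m}^0(M')=(0)$; hence $M'_j\cong M_j$ for $j\ge 1$ and $M'_0=(0)$, and therefore $\rmT_Q^1(M)=\rmT_Q^1(M')$ by Fact \ref{fact2}(2). Substituting all of this into $\g_s(Q;M)$ and into $\hdeg_Q(M)-\e_Q^0(M)-\rmT_Q^1(M)$, subtracting, and using $\ell_A(W)=\ell_A\big(W/(W\cap QM)\big)+\ell_A(W\cap QM)$, produces the displayed identity.

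The computations in the second paragraph are routine but must be done with care about index ranges: the finite-length submodule $W$ contributes to $\hdeg_Q$, to $\e_Q^0$ and to the top Hilbert coefficient, but is invisible to $\e_Q^1$, to $\rmT_Q^1$ and to $\H_{\m}^j(-)$ for $j\ge 1$, and one must track exactly which terms it enters so that the cancellations in the final identity are exact. The only ingredient that is not bookkeeping is Theorem \ref{genus}: the ``only if'' direction genuinely uses that the extremal equality for $M$ already encodes the vanishing $QM\cap\H_{\m}^0(M)=(0)$. (Alternatively, one can avoid invoking assertion $(\mathrm{iii})$ by using instead the basic inequality $\g_s(Q;N)\le\hdeg_Q(N)-\e_Q^0(N)-\rmT_Q^1(N)$ of \cite{GO1} for $N=M'$: the right-hand side of the identity is then a sum of two nonnegative quantities, so it vanishes if and only if both summands do.) I expect the main obstacle to be precisely this organizational bookkeeping rather than any conceptual difficulty.
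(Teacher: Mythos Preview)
The paper does not give a self-contained proof here; it simply cites \cite[Lemma 3.5]{GO1}. Your proposal supplies the argument in full, and the key additive identity you derive is correct: the comparisons of $\e_Q^0$, $\e_Q^1$, $\hdeg_Q$, $\rmT_Q^1$, and $\ell_A(M/QM)$ across the sequence $0\to W\to M\to M'\to 0$ are all standard and assemble exactly as you indicate, so both directions follow formally from the identity together with a nonnegativity input.

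One caution on the ``only if'' direction. Invoking assertion $(\mathrm{iii})$ of Theorem~\ref{genus} is risky: in \cite{GO1} the present lemma (their Lemma 3.5) is part of the machinery used to establish Theorem 1.3, precisely because passing to $M/\H_{\m}^0(M)$ is the reduction step in that proof, just as Lemma~\ref{W} is used in the proof of Theorem~\ref{main2} here. So appealing to $(\mathrm{iii})$ is likely circular. Your parenthetical alternative is the correct route and should be the main argument: the basic inequality $\g_s(Q;M')\le\hdeg_Q(M')-\e_Q^0(M')-\rmT_Q^1(M')$ from \cite{GO1} (which is proved prior to and independently of Lemma 3.5 there) makes the right-hand side of your identity a sum of two nonnegative terms, so its vanishing forces both to vanish. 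Promote that argument from the parenthesis and drop the appeal to Theorem~\ref{genus}$(\mathrm{iii})$.
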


\begin{proof}
See \cite[Lemma 3.5]{GO1}.
\end{proof}

\begin{prop}\label{d-seq1}
Let $M$ be a finitely generated $A$-module with $d = \dim_AM \geq 2$ and $Q$ a parameter ideal of $A$.
Let $a_1 \in Q \backslash \m Q$ be a superficial element for $M$ with respect to $Q$ such that $\hdeg_Q(M/a_1M)-\rmT_Q^1(M/aM) \leq \hdeg_Q(M)-\rmT_Q^1(M)$.
Assume that $$\g_s(Q;M)=\hdeg_Q(M)-\e_Q^0(M)-\rmT_Q^1(M).$$
Then there exist elements $a_2,a_3,\ldots,a_d \in A$ such that $Q=(a_1, a_2,\ldots,a_d)$ and $a_1, a_2,\ldots,a_d$ forms a $d$-sequence on $M$.
\end{prop}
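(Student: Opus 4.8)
The plan is to induct on $d = \dim_A M$, peeling off the given $a_1$ first and manufacturing $a_2,\dots,a_d$ from a $(d-1)$-dimensional instance of the statement. First I would reduce to the case $\depth_A M > 0$. Put $W = \H_\m^0(M)$ and $M' = M/W$. By Lemma \ref{W}, the hypothesis $\g_s(Q;M) = \hdeg_Q(M) - \e_Q^0(M) - \rmT_Q^1(M)$ yields both $QM \cap W = (0)$ and $\g_s(Q;M') = \hdeg_Q(M') - \e_Q^0(M') - \rmT_Q^1(M')$; moreover $a_1$ stays superficial for $M'$, and, since $W \cap a_1M \subseteq W \cap QM = (0)$, applying Lemma \ref{xyz}(2) to $0 \to W \to M \to M' \to 0$ and to $0 \to W \to M/a_1M \to M'/a_1M' \to 0$ (and noting that $\rmT_Q^1$ is unaffected by killing $W$, since it involves only the $M_j$ with $j\ge 1$) shows that $a_1$ still satisfies $\hdeg_Q(M'/a_1M') - \rmT_Q^1(M'/a_1M') \le \hdeg_Q(M') - \rmT_Q^1(M')$. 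Conversely, any $d$-sequence $a_1,\dots,a_d$ on $M'$ with $Q = (a_1,\dots,a_d)$ is automatically one on $M$: if $a_ia_jx \in (a_1,\dots,a_{i-1})M$ then $a_jx \in (a_1,\dots,a_{i-1})M + W$, and the $W$-component of $a_jx$ lies in $QM \cap W = (0)$. So it suffices to treat the case $\depth_A M > 0$, in which case the superficial element $a_1$ is $M$-regular, as $(0):_M a_1$ has finite length and hence lies in $W = (0)$.

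Now induct on $d$. If $d = 2$, then $\hdeg_Q(M) - \e_Q^0(M) - \rmT_Q^1(M) = \hdeg_Q(M_0) = 0$, so the hypothesis reads $\g_s(Q;M) = 0$, and for any $a_2$ with $Q = (a_1,a_2)$ the implication $(1)\Rightarrow(3)$ of Proposition \ref{d=2} shows that $a_1,a_2$ is a $d$-sequence on $M$. If $d \ge 3$, put $\overline M = M/a_1M$, $\overline Q = Q/(a_1)$, so $\dim_A\overline M = d-1 \ge 2$ and, by Lemma \ref{/a}, $\g_s(\overline Q;\overline M) = \g_s(Q;M)$. The crux is the descent of the genus equality:
\[
\g_s(\overline Q;\overline M) = \hdeg_{\overline Q}(\overline M) - \e_{\overline Q}^0(\overline M) - \rmT_{\overline Q}^1(\overline M).
\]
Granting it, choose (by Lemma \ref{superficial3} when $d \ge 4$; when $d = 3$ any superficial $a_2 \in \overline Q \setminus \overline\m\,\overline Q$ works, since the case $d=2$ needs no $\hdeg$-hypothesis) such an $a_2$ satisfying $\hdeg_{\overline Q}(\overline M/a_2\overline M) - \rmT_{\overline Q}^1(\overline M/a_2\overline M) \le \hdeg_{\overline Q}(\overline M) - \rmT_{\overline Q}^1(\overline M)$, apply the induction hypothesis to $(\overline M,\overline Q,a_2)$ to obtain $a_3,\dots,a_d$ with $\overline Q = (a_2,\dots,a_d)$ and $a_2,\dots,a_d$ a $d$-sequence on $\overline M$, and lift $a_2,\dots,a_d$ arbitrarily to $A$, so that $Q = (a_1,a_2,\dots,a_d)$. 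Then $a_1,\dots,a_d$ is a $d$-sequence on $M$: the conditions $(0):_M a_1a_j = (0):_M a_j$ hold because $a_1$ is $M$-regular, while the remaining conditions involve colon modules containing $a_1M$ and thus correspond under $M \twoheadrightarrow \overline M$ to the $d$-sequence conditions for $a_2,\dots,a_d$ on $\overline M$.

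The descent is, I expect, the main obstacle. Since $a_1$ is $M$-regular, Matlis-dualizing the long exact local cohomology sequence of $0 \to M \overset{a_1}{\to} M \to \overline M \to 0$ gives exact sequences $0 \to M_{j+1}/a_1M_{j+1} \to \overline M_j \to (0:_{M_j}a_1) \to 0$ for $j \ge 0$ (with $M_0 = (0)$ here). One then combines these with Lemma \ref{xyz}, with the identity for $\hdeg_I(-) - \rmT_I^1(-)$ recorded before Lemma \ref{superficial3}, with $\e_{\overline Q}^0(\overline M) = \e_Q^0(M)$, and — the essential point — with the Hilbert-coefficient package that Theorem \ref{genus} attaches to the genus equality for $M$ (so that for $d \ge 4$ one has $Q\H_\m^j(M) = (0)$ for $1 \le j \le d-3$, pinning down the terms $(0:_{M_j}a_1) = M_j$ and $M_{j+1}/a_1M_{j+1} = M_{j+1}$ for small $j$, hence transporting the whole package to $\overline M$). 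For $d = 3$, after the depth reduction the descent amounts to the single equality $\hdeg_Q(M_1) = \ell_A(M_1/a_1M_1)$: the hypothesized inequality $\hdeg_Q(M/a_1M) - \rmT_Q^1(M/a_1M) \le \hdeg_Q(M) - \rmT_Q^1(M)$ unwinds precisely to $\ell_A(M_1/a_1M_1) \le \hdeg_Q(M_1)$, while Theorem \ref{genus} gives $\e_Q^2(M) = \rmT_Q^2(M) = \hdeg_Q(M_1)$ and, since $a_1$ is $M$-regular and superficial, $\e_{\overline Q}^2(\overline M) = \e_Q^2(M)$, so $\hdeg_Q(M_1) = \e_{\overline Q}^2(\overline M) \le \ell_A(\H_\m^0(\overline M)) = \ell_A(M_1/a_1M_1)$ by Corollary \ref{d=2'}; the two inequalities force equality. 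Carrying out the analogous but longer bookkeeping for $d \ge 4$ — i.e. checking that the Theorem \ref{genus} data really does transport cleanly to $\overline M$ — is the delicate part; the rest of the argument is formal.
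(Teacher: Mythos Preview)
The paper does not give its own argument here; it simply cites \cite[Lemma 3.6]{GO1}. So the question is whether your outline stands on its own.

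Your overall architecture is the natural one: kill $\H_\m^0(M)$ via Lemma~\ref{W}, settle $d=2$ by Proposition~\ref{d=2}, and for $d\ge 3$ descend the genus equality to $\overline M=M/a_1M$ and lift a $d$-sequence back. The depth reduction, the base case, and the lift of a $d$-sequence from $\overline M$ to $M$ (using that $a_1$ is $M$-regular) are all correct.

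Two issues remain.

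\textbf{Circularity.} You invoke Theorem~\ref{genus}, which is \cite[Theorem 1.3]{GO1}. But Proposition~\ref{d-seq1} is \cite[Lemma 3.6]{GO1}, and conclusion~(i) of Theorem~\ref{genus} is visibly Proposition~\ref{d-seq1} with the specific $a_1$ forgotten. It is therefore all but certain that \cite{GO1} deduces Theorem~\ref{genus}(i), and plausibly also (iii), \emph{from} Lemma~3.6. Your $d=3$ step uses only the implication $(1)\Rightarrow(2)(a)$, which may well be proved independently; but your $d\ge 4$ sketch leans on (iii), which is precisely the suspect part.

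\textbf{A concrete gap for $d\ge 4$.} Even granting Theorem~\ref{genus}(iii), the ``transport'' does not close. The annihilation $QM_j=(0)$ for $1\le j\le d-3$ forces $\ell_A(M_j)<\infty$ in that range, and Lemma~\ref{xyz}(2) then gives $\hdeg_Q(\overline M_j)=\hdeg_Q(M_j)+\hdeg_Q(M_{j+1})$ for $0\le j\le d-4$. But at the top index $j=d-3$ the short exact sequence reads $0\to M_{d-2}/a_1M_{d-2}\to \overline M_{d-3}\to M_{d-3}\to 0$, and Lemma~\ref{xyz}(1) yields only
\[
\hdeg_Q(\overline M_{d-3})\ \le\ \ell_A(M_{d-3})+\hdeg_Q(M_{d-2}/a_1M_{d-2}).
\]
Since $a_1$ is \emph{prescribed}, you cannot assume $\hdeg_Q(M_{d-2}/a_1M_{d-2})\le\hdeg_Q(M_{d-2})$, and no lower bound is available. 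Unwinding the binomial sums, the descent equality you need reduces exactly to $\hdeg_Q(\overline M_{d-3})=\ell_A(M_{d-3})+\hdeg_Q(M_{d-2})$, for which you have only the $\le$ direction (equivalent to the hypothesis on $a_1$). Already for $d=4$ this is the obstruction: you need $\hdeg_Q(\overline M_1)=\ell_A(M_1)+\hdeg_Q(M_2)$, and nothing in your toolkit supplies the missing $\ge$.

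\textbf{The fix.} What makes the induction go through uniformly in \cite{GO1} is the general upper bound
\[
\g_s(Q;N)\ \le\ \hdeg_Q(N)-\e_Q^0(N)-\rmT_Q^1(N)
\]
for arbitrary $N$ and parameter ideals $Q$; Theorem~\ref{genus} is the characterization of equality in this bound, and the bound itself is established prior to (and independently of) Lemma~3.6. With it, the descent is a one-line squeeze: by Lemma~\ref{/a} and the hypothesis,
\[
\g_s(\overline Q;\overline M)=\g_s(Q;M)=\hdeg_Q(M)-\e_Q^0(M)-\rmT_Q^1(M)\ \ge\ \hdeg_Q(\overline M)-\e_Q^0(\overline M)-\rmT_Q^1(\overline M)\ \ge\ \g_s(\overline Q;\overline M),
\]
the first $\ge$ being the hypothesis on $a_1$ (plus $\e_Q^0(\overline M)=\e_Q^0(M)$) and the second the general bound applied to $\overline M$. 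Equality throughout gives the descent without any appeal to Theorem~\ref{genus}, and the induction then runs exactly as you outlined.
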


\begin{proof}
See \cite[Lemma 3.6]{GO1}.
\end{proof}

We also need the following, where
$$\rmU(N)=\bigcup_{\ell>0}[N:_M \m^{\ell}]$$ for each submodule $N$ of $M$.

\begin{prop}\label{cap}
Suppose that $A$ is a Cohen-Macaulay local ring with $d \geq 2$.
Let $M$ be a finitely generated $A$-module with $d= \dim_AM$.
Assume that there exists an exact sequence $0 \to M \to F \to X \to 0$ of $A$-modules with $F$ a finitely generated free $A$-module.
Let $Q=(a,b,a_3,\ldots,a_d)$ be a parameter ideal of $A$ and assume that $a$ is a superficial element for $M$, $F$, and $X$, $a, a_3, \cdots,a_d$ forms a $d$-sequence on $M/\rmU(bM)$, and $b\H_{\m}^1(M)=(0)$.
Then $$Q(M/aM) \cap \H_{\m}^0(M/aM)=(0).$$
\end{prop}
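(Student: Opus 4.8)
\textbf{Proof proposal for Proposition \ref{cap}.}

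The plan is to reduce the vanishing statement to an application of the hypotheses by carefully analyzing the short exact sequence $0\to M\overset{a}{\to}M\to\overline M\to0$ (writing $\overline M=M/aM$) together with the sequence $0\to M\to F\to X\to0$ reduced modulo $a$. First I would note that since $A$ is Cohen--Macaulay and $a$ is superficial for $F$ (hence $F$-regular) and for $X$, the module $\overline M=M/aM$ sits in an exact sequence $0\to[(0):_X a]\to\overline M\to F/aF\to X/aX\to0$, and since $\ell_A([(0):_X a])<\infty$ we get $\H_\m^0(\overline M)\cong[(0):_X a]\cong\rmU_{\overline M}(0)$, the unmixed component of $(0)$ in $\overline M$. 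In particular $\H_\m^0(\overline M)=\rmU(0_{\overline M})$, so it suffices to show that $Q\overline M\cap\rmU(0_{\overline M})=(0)$; equivalently, that the image of every element of $Q\overline M$ in $L:=\overline M/\rmU(0_{\overline M})$ being zero forces the element itself to be zero. Since $L\cong\Im(\overline\varphi)$ is unmixed of dimension $d-1$, this is a statement that the superficial direction $a$ together with the remaining parameters behaves well on $L$.

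The key step is to invoke the $d$-sequence hypothesis on $M/\rmU(bM)$ and the annihilator condition $b\H_\m^1(M)=(0)$. I would first pass to $N:=M/\rmU(bM)$: since $b\H_\m^1(M)=(0)$ one checks that $\H_\m^0(N)=(0)$ and that $\rmU(bM)/bM\cong\H_\m^0(M/bM)$ is annihilated by the relevant powers, so that $b$ becomes, after this modification, an $N$-regular element in a way compatible with the remaining $d$-sequence $a,a_3,\dots,a_d$ on $N$. The hypothesis that $a,a_3,\dots,a_d$ is a $d$-sequence on $N$ gives the colon equalities $(a,a_3,\dots,a_{i-1})N:_N a_ia_j=(a,a_3,\dots,a_{i-1})N:_N a_j$, which is exactly what is needed to control $Q\overline M\cap\H_\m^0(\overline M)$: an element of $Q\overline M$ lying in $\H_\m^0(\overline M)$ is killed by a power of $\m$, hence by a power of $b$, and the colon relations then push it into $a\overline M$ in a way that, iterated, shows it is zero. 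Concretely I would lift a putative nonzero element $\xi\in Q\overline M\cap\H_\m^0(\overline M)$ to $x\in QM$ with $ax\in$ (lower terms), write $x=\sum c_k a_k'$ with $a_k'$ ranging over $a,b,a_3,\dots,a_d$, and use $b\H_\m^1(M)=(0)$ to eliminate the $b$-component modulo $aM$, then the $d$-sequence property on $M/\rmU(bM)$ to eliminate the rest.

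The main obstacle I anticipate is bookkeeping the interaction between the two "correction" submodules $\H_\m^0(\overline M)$ (coming from killing $a$) and $\rmU(bM)$ (coming from killing $b$): one must show that modding out by $\rmU(bM)$ and then by $a$ is compatible, i.e. that $\rmU(bM)\cap aM$ and the image of $\H_\m^0(M/bM)$ in $\overline M$ interact as expected, so that the $d$-sequence hypothesis stated for $M/\rmU(bM)$ actually delivers information about $\overline M=M/aM$. I would handle this by first establishing the length-finiteness statements ($\ell_A(\rmU(bM)/bM)<\infty$, $\ell_A(\H_\m^0(\overline M))<\infty$) and then arguing that $\H_\m^0(\overline M)$ is the image of a submodule of $M$ on which $b$ acts nilpotently, reducing everything to the regular-sequence-like behaviour of $a,a_3,\dots,a_d$ on $M/\rmU(bM)$ guaranteed by the $d$-sequence property. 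Once that compatibility is in place, the colon manipulations are routine. The whole argument should run parallel to \cite[Lemma 3.6]{GO1} and \cite[Theorem 1.3]{GO1}, adapted to track the second parameter $b$.
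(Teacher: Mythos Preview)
Your outline has the right ingredients but misses the decisive step, and the order of elimination you propose does not work as stated. In the paper's proof the $d$-sequence hypothesis is used \emph{first}: one maps $x\in QM\cap\rmU(aM)$ into $L'/aL'$ where $L'=M/\rmU(bM)$, observes that the image lies in $\H_\m^0(L'/aL')\cap(a,a_3,\dots,a_d)(L'/aL')=(0)$ (since $bL'=0$ and $a,a_3,\dots,a_d$ is a $d$-sequence on $L'$), and concludes $x\in aM+\rmU(bM)$. The hard part is then to show that the residual piece $z\in\rmU(aM)\cap\rmU(bM)$ already lies in $aM$. Your plan to ``use $b\H_\m^1(M)=(0)$ to eliminate the $b$-component modulo $aM$'' only yields $bx\in aM$ (via the embedding $\H_\m^0(\overline M)\hookrightarrow\H_\m^1(M)$), which says nothing directly about the coefficient $c_b$ in a decomposition $x=c_aa+c_bb+\sum c_ia_i$.

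The missing idea is a colon equality that requires the embedding $M\hookrightarrow F$ in an essential way, not merely to identify $\H_\m^0(\overline M)$. From $z\in\rmU(aM)\cap\rmU(bM)$ and the two annihilation facts $a^\ell\rmU(bM)\subseteq bM$, $b\rmU(aM)\subseteq aM$ one writes $a^\ell z=bv$, $bz=aw$, whence $b^2v=a^{\ell+1}w$. The crux is the equality
\[
[a^{\ell+1}M:_M b^2]=[a^{\ell+1}M:_M b],
\]
proved by reducing the sequence $0\to M\to F\to X\to0$ modulo $a^{\ell+1}$ and using that $a^{\ell+1},b^2$ is a regular sequence on the free module $F$ together with $b\H_\m^1(M)=(0)$. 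This forces $bv\in a^{\ell+1}M$, hence $a^\ell z\in a^{\ell+1}M$ and $z\in aM$. Your proposal treats the colon manipulations as ``routine'' once the bookkeeping is done, but this particular colon equality is the heart of the argument and is precisely where the free-module hypothesis earns its keep; without it the proof does not close.
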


\begin{proof}
We set $\overline{M}=M/aM$, $\overline{M}'=M/bM$, and $L'=\overline{M}'/\H_{\m}^0(\overline{M}')=M/\rmU(bM)$.
Take $\alpha \in Q\overline{M} \cap \H_{\m}^0(\overline{M})$ and write $\alpha=\overline{x}$ with $x \in QM \cap \rmU(aM)$, where $\overline{x}$ denotes the image of $x$ in $\overline{M}$. 
Let us  consider the composite of the canonical maps
$$ \rho:M \to \overline{M}' \to L' \to L'/aL'.$$ Then $\m^{\ell}x \subseteq aM$ for all $\ell \gg 0$ and $x \in QM$.  Therefore 
$$ \rho(x) \in \H_{\m}^0(L'/aL') \cap (a,a_3,\ldots,a_d)(L'/aL')=(0),$$
because $a, a_3,\cdots,a_d$ forms a $d$-sequence on $L'$. 
Consequently, $x \in aM+\rmU(bM) \cap \rmU(aM)$.
Let us write $x=y+z$ with $y \in aM$ and $z \in \rmU(bM) \cap \rmU(aM)$. Then because $a$ and $b$ are $M$-regular, we have the embeddings $$ \rmU(aM)/aM=\H_{\m}^0(\overline{M}) \hookrightarrow \H_{\m}^1(M),$$
$$ \rmU(bM)/bM=\H_{\m}^0(\overline{M}') \hookrightarrow \H_{\m}^1(M),$$ so that, for some integer $\ell > 0$, $a^{\ell}\rmU(bM) \subseteq bM$ and $b\rmU(aM) \subseteq aM$, since $a^{\ell}\H_{\m}^1(M)=(0)$ as $\H_{\m}^1(M) \cong \H_{\m}^0(X)$ is finitely generated and $b\H_{\m}^1(M)=(0)$.
Therefore  $ a^{\ell}z \in bM$ and $bz \in aM$.
We now write 
$$ a^{\ell}z = bv \ \ \ \mbox{and} \ \ \ bz =aw$$
with $v,w \in M$. 
Then $v \in [a^{\ell+1}M:_Mb^2]$, since $a^{\ell}bz=b^2v=a^{\ell+1}w$.

\begin{claim}
$[a^{\ell+1}M:_Mb^2] = [a^{\ell+1}M:_M b]$.
\end{claim}

\noindent
{\it{Proof of Claim 1.}}
Tensoring exact sequence 
$$0 \to M \overset{\varphi}{\to} F \to X \to 0$$
by $A/(a^{\ell+1})$, we get the exact sequence
$$ 0 \to [(0):_X a^{\ell+1}] \to M/a^{\ell+1}M \overset{\widetilde{\varphi}}{\to} F/a^{\ell+1}F \to X/a^{\ell+1}X \to 0,$$
where $\widetilde{\varphi}=A/(a^{\ell+1}) \otimes \varphi$. Since $\ell_A([(0):_Xa]) < \infty$, $[(0):_X a]_{\p}=(0)$ for all $\p \in \Spec A \backslash \{\m\}$.
Hence $a$ is $X_{\p}$-regular, so that $\ell_A([(0):_Ma^{\ell+1}])< \infty$.
Therefore because $\depth_AF/a^{\ell+1}F >0$, we get an isomorphism $$[(0):_X a^{\ell+1}] \cong \H_{\m}^0(M/a^{\ell+1}M).$$
Take $\xi \in [a^{\ell+1}M:_Mb^2]$ and let $\overline{\xi}$ denotes the image of $\xi$ in $M/a^{\ell+1}M$.
Then $b^2\widetilde{\varphi}(\overline{\xi})=\widetilde{\varphi}(b^2\overline{\xi})=0$ in $F/a^{\ell+1}F$, whence $\widetilde{\varphi}(\overline{\xi})=0$, because $a^{\ell+1},b^2$ forms an $F$-regular sequence.
Therefore $$\overline{\xi} \in \ker \widetilde{\varphi} \cong \H_{\m}^0(M/a^{\ell+1}M) \hookrightarrow \H_{\m}^1(M)$$
and hence $b \overline{\xi}=0$ in $M/a^{\ell+1}M$, because $b \H_{\m}^1(M)=(0)$.
Thus $b\xi \in a^{\ell+1}M$, so that $\xi \in [a^{\ell+1}M:_M b]$.
Consequently $[a^{\ell+1}M:_Mb^2] \subseteq [a^{\ell+1}M:_M b]$, which proves Claim \ref{claim}.\\

We have $v \in [a^{\ell+1}M:_M b]$ by Claim 1.
Hence $bv \in a^{\ell+1}M$. 
Then $z \in aM$, since $a^{\ell}bz=b^2v \in a^{\ell+1}bM$ and $\depth_AM >0$. Therefore $x=y+z \in aM$, so that $\alpha=\overline{x}=0$ in $\overline{M}$. Thus $Q\overline{M} \cap \H_{\m}^0(\overline{M})=(0)$, which proves Proposition 4.6.
\end{proof}

We are now in a position to prove Theorem 4.1.

\begin{proof}[Proof of Theorem 4.1]
Thanks to \cite[Theorem 1.3]{GO1}, we have only to show the implication $(2) \Rightarrow (1)$. 
By Proposition 3.4 we may assume that $A$ is a Gorenstein local ring and that there exists an exact sequence
$$0 \to M \overset{\varphi}{\to} F \to X \to 0 \ \ \ \ (\sharp)$$
of $A$-modules with $F$ a finitely generated free $A$-module and $X =\operatorname{Coker} \varphi$.

We proceed by induction on $d$. 
Suppose that $d=3$ and let $Q=(a,b,c)$. Since the residue class field $A/\m$ of $A$ is infinite, we may choose the element  $a$ is superficial for $M$, $F$, and $X$ with respect to $Q$, and set $\overline{Q}=Q/(a)$, $\overline{M}=M/aM$, $W=\H_{\m}^0(\overline{M})$, and $L=\overline{M}/W$.
Then by the exact sequence
$$ 0 \to W \to \H_{\m}^1(M) \overset{\hat{a}}{\to} \H_{\m}^1(M) $$
of local cohomology modules, we have $$\ell_A(W) = \ell_A([(0):_{\H_{\m}^1(M)}a]).$$
Then
\begin{eqnarray*}
\e_Q^2(M)=\e_Q^2(\overline{M})&=&\e_Q^2(L)+\ell_A(W)\\
&\leq& \ell_A([(0):_{\H_{\m}^1(M)}a])\\
&\leq& \ell_A(\H_{\m}^1(M))\\
&=& \hdeg_Q(M_1)\\
&=& \rmT_Q^2(M)=\e_Q^2(M)
\end{eqnarray*}
because $\e_Q^2(M)=\e_Q^2(\overline{M})=\e_Q^2(L)+\ell_A(W)$, and $\e_Q^2(L) \leq 0$ by Proposition 3.1.
Hence $\e_Q^2(L) = 0$, $a\H_{\m}^1(M)=(0)$, and $\hdeg_Q(\overline{M}_0)=\ell_A(W)=\hdeg_Q(M_1)$.
Then, by Proposition 3.1, we get $$\g_s(\overline{Q};L)=0=\hdeg_Q(L)-\e_Q^0(L)-\rmT_Q^1(L)$$
because $\e_Q^2(L)=0$.
On the other hand, we have $\g_s(Q;M)=\g_s(\overline{Q};\overline{M})$ by Lemma 4.3 and
\begin{eqnarray*}
\hdeg_Q(M)-\e_Q^0(M)-\rmT_Q^1(M)&=&\hdeg_Q(M_1)\\
&=& \hdeg_Q(\overline{M}_0)\\
&=& \hdeg_{\overline{Q}}(\overline{M})-\e_{\overline{Q}}^0(\overline{M})-\rmT_{\overline{Q}}^1(\overline{M}).
\end{eqnarray*}
Therefore, thanks to Lemma 4.4, to prove $\g_s(Q;M)=\hdeg_Q(M)-\e_Q^0(M)-\rmT_Q^1(M)$, it is enough to show that $Q\overline{M} \cap W=(0)$.

Let us choose the element $b$ is superficial for $M$ with respect to $Q$, and set $\overline{M}'=M/bM$ and $L'=\overline{M}'/\H_{\m}^0(\overline{M}')$.
Then by the same argument as above, $\e_Q^2(L')=0$ and $b\H_{\m}^1(M)=(0)$.
We now choose the element $a$ to be superficial also for $L'$ with respect to $\overline{Q}'$.
Then, thanks to Proposition 3.1, $a,c$ forms a $d$-sequence on $L'$, because $\e_Q^2(L')=0$.
Therefore $Q\overline{M} \cap W=(0)$ by Proposition 4.6.
Thus, by Lemma 4.4,
$\g_s(Q;M)=\hdeg_Q(M)-\e_Q^0(M)-\rmT_Q^1(M)$ as required.

Assume that $d \geq 4$ and that our assertion holds true for $d-1$.
Since the residue class field $A/\m$ of $A$ is infinite, we may now choose an element $a \in Q \backslash \m Q$ so that $a$ is superficial for $M$, $F$, $X$, and $M_j$ with respect to $Q$ and $\hdeg_Q(M_j/aM_j) \leq \hdeg_Q(M_j)$ for all $1 \leq j \leq d-2$.
Set $\overline{M}=M/aM$ and $\overline{Q}=Q/(a)$.
Then, by the same argument as is in the proof of Theorem 3.3 (2), we have
$$ \ell_A([(0):_{M_j} a]) \leq \hdeg_Q(M_j)  \ \ \ \text{and}$$
$$ \hdeg_Q(\overline{M_j}) \leq \ell_A([(0):_{M_j} a]) +\hdeg_Q(M_{j+1}/aM_{j+1}) $$
for all $1 \leq j \leq d-3$.

We consider the exact sequence
$$ 0 \to [(0):_Xa] \to \overline{M} \overset{\overline{\varphi}}{\to} F/aF \to X/aX \to 0 $$
of $A$-modules obtained by  exact sequence $(\sharp)$,  where $\overline{\varphi}=A/(a) \otimes \varphi$.
Set $L=\Im \overline{\varphi}$. Then since $L$ is unmixed with $\dim_AL=d-1$ and $\ell_A([(0):_Xa]) < \infty$, we get $$ [(0):_Xa] \cong \H_{\m}^0(\overline{M})=\rmU_{\overline{M}}(0), $$
where $U=\rmU_{\overline{M}}(0)$ denotes the unmixed component of $(0)$ in $\overline{M}$. Consequently, because $a$ is superficial for $M$ with respect to $Q$ and $L \cong \overline{M}/U$ with $\ell_A(U) < \infty$, we see $\e_Q^i(M)=\e_Q^i(\overline{M})=\e_Q^i(L)$ for $i=0, 1, 2$ and $\H_{\m}^j(L) \cong \H_{\m}^j(\overline{M})$ for all $j \geq 1$.
Hence $\hdeg_Q(L_j)=\hdeg_Q(\overline{M}_j)$ for all $1 \leq j \leq d-3$. Therefore 
\begin{eqnarray*}
\e_Q^2(M)=\e_Q^2(\overline{M})=\e_Q^2(L) &\leq& \rmT_Q^2(L)\\
&=& \sum_{j=1}^{d-3}\binom{d-4}{j-1}\hdeg_Q(L_j)\\
&=& \sum_{j=1}^{d-3}\binom{d-4}{j-1}\hdeg_Q(\overline{M}_j)\\
&\leq& \sum_{j=1}^{d-3}\binom{d-4}{j-1}\{ \ell_A([(0):_{M_j} a]) +\hdeg_Q(M_{j+1}/aM_{j+1}) \}\\
&\leq& \sum_{j=1}^{d-3}\binom{d-4}{j-1}\{\hdeg_Q(M_j)+\hdeg_Q(M_{j+1})\}\\
&=& \sum_{j=1}^{d-2}\binom{d-3}{j-1}\hdeg_Q(M_j)\\
&=& \rmT_Q^2(M)=\e_Q^2(M),
\end{eqnarray*}
because $\e_Q^2(L) \leq \rmT_{Q}^2(L)$ by Theorem 3.3. 
Thus $\e_Q^2(L)=\rmT_Q^2(L)$, $\hdeg_Q(\overline{M}_j)=\hdeg_Q(M_j)+\hdeg_Q(M_{j+1})$, and $aM_j=(0)$ for all $1 \leq j \leq d-3$, so that the hypothesis of induction on $d$ yields $\g_s(\overline{Q}:L)=\hdeg_{Q}(L)-\e_{Q}^0(L)-\rmT_Q^1(L)$.
We also have $\g_s(Q;M)=\g_s(\overline{Q};\overline{M})$ by Lemma 4.3, and
\begin{eqnarray*}
\hdeg_{Q}(\overline{M})-\e_{Q}^0(\overline{M})-\rmT_Q^1(\overline{M})&=& \sum_{j=0}^{d-3}\binom{d-3}{j}\hdeg_Q(\overline{M}_j)\\
&=& \sum_{j=0}^{d-3}\binom{d-3}{j}\{\hdeg_Q(M_j)+\hdeg_Q(M_{j+1})\}\\
&=& \sum_{j=0}^{d-2}\binom{d-2}{j}\hdeg_Q(M_j)\\
&=& \hdeg_Q(M)-\e_Q^0(M)-\rmT_Q^1(M). \ \ \ \ \ \ \ \ \ \ \ \ \ (\dagger_1)
\end{eqnarray*}
Thus, thanks to Lemma 4.4, it is enough to show that $Q\overline{M} \cap U=(0)$.

Let us choose an element $b \in Q \backslash \m Q$ so that $b$ is  superficial for $M$, $F$, $X$, and $M_j$ with respect to $Q$, $\hdeg_Q(M_j/bM_j) \leq \hdeg_Q(M_i)$ for all $1 \leq j \leq d-2$, and $a$, $b$ forms a part of a minimal system of generators of $Q$.
We set $\overline{M}'=M/bM$ and $\overline{Q}'=Q/(b)$. Then, tensoring $(\sharp)$ by $\overline{A}'=A/(b)$, we get the exact sequence
$$ 0 \to [(0):_Xb] \to \overline{M}' \overset{\overline{\varphi}'}{\to} F/bF \to X/bX \to 0, $$ 
where $\overline{\varphi}'=A/(b) \otimes \varphi$. Set $L'=\Im \overline{\varphi}'$. Then because $L'$ is unmixed with $\dim_AL'=d-1$ and $\ell_A([(0):_X b]) < \infty$, we have
$$ [(0):_X b] \cong \H_{\m}^0(\overline{M}')=\rmU_{\overline{M}'}(0),$$
where $U'=\rmU_{\overline{M}'}(0)$ is the unmixed component of $(0)$ in $\overline{M}'$. Consequently by the same argument as above, $\e_Q^2(L')=\rmT_Q^2(L')$ and $bM_i=(0)$ for all $1 \leq i \leq d-3$, so that thanks to the hypothesis of induction on $d$, we get $\g_s(\overline{Q}';L')=\hdeg_{\overline{Q}'}(L')-\e_{\overline{Q}'}^0(L')-\rmT_{\overline{Q}'}^1(L')$.

We now choose the element $a \in Q \setminus \m Q$ to be superficial also for $L'$ with respect to $\overline{Q}'$ and $\hdeg_{\overline{Q}'}(L'/aL')-\rmT_{\overline{Q}'}^1(L'/aL') \leq \hdeg_{\overline{Q}'}(L')-\rmT_{\overline{Q}'}^1(L')$ (Lemma 2.7).
Then by Proposition 4.5 there exist elements $a_3,a_4,\ldots,a_d \in A$ such that $\overline{Q}'=(a,a_3,\ldots,a_d)\overline{A}'$ and $a,a_3,a_4,\ldots,a_d$ forms a $d$-sequence on $L'$, because $\g_s(\overline{Q}';L')=\hdeg_{\overline{Q}'}(L')-\e_{\overline{Q}'}^0(L')-\rmT_{\overline{Q}'}^1(L')$.
Therefore, by Proposition 4.6, we get $Q\overline{M} \cap U=(0)$.
Hence the required equality $\g_s(Q;M)=\hdeg_Q(M)-\e_Q^0(M)-\rmT_Q^1(M)$ follows by Lemma 4.4, which completes the proof of Theorem 4.1 as well as the proof of the implication $(2) \Rightarrow (1)$.
\end{proof}

The following example shows that the implication $(2) \Rightarrow (1)$ does not hold true in general, unless $M$ is unmixed.

\begin{ex}\label{ex2}
Let $S$ be a complete regular local ring with maximal ideal $\n$, $\dim S = 4$, and infinite residue class field $S/\n$. 
Let $\fkn = (X,Y,Z,W)$ and $\ell \geq 1$ be integers.
We set $$A=S/(X) \cap (Y^{\ell}, Z,W).$$ Let $\m=(x,y,z,w)A$ be the maximal ideal of $A$ and $Q=(x-y,x-z, x-w)A$, where $x$, $y$, $z$, and $w$ denote the images of $X$, $Y$, $Z$, and $W$ in $A$, respectively. Then, since $\m^{\ell+1}=Q\m^{\ell}$, $Q$ is a reduction of $\m$. We furthermore have the following:

\begin{itemize}
\item[$(1)$] $A$ is mixed with $\dim A=3$ and $\depth A=1$,
\item[$(2)$] $\ell_A(A/Q)=2$, $\e_Q^0(A)=1$, $\e_Q^1(A)=0$, $\e_Q^2(A)=\ell$, and $\e_Q^3(A)=\binom{\ell}{2}$. Hence $\g_s(Q;A)=1$.
\item[$(3)$] $\hdeg_Q(A)=2\ell+1$ and $\rmT_Q^1(A)=\rmT_Q^2(A)=\ell$.
\item[$(4)$] $\e_Q^2(A)=\rmT_Q^2(A)$ for all $\ell \geq 1$.
\item[$(5)$] $\g_s(Q;A)= \hdeg_Q(A)-\e_Q^0(A)-\rmT_Q^1(A)$ if $\ell=1$, but $\g_s(Q;A)< \hdeg_Q(A)-\e_Q^0(A)-\rmT_Q^1(A)$ if $\ell \geq 2$.
\end{itemize}
\end{ex}

\begin{proof}
Consider the canonical exact sequence
$$ 0 \to xA \to A \to A/xA \to 0.$$
Set $\a=(y^{\ell},z, w)A$. Then $U=xA ~(\cong A/\a$) is the unmixed component of $(0)$ in $A$.
Set $B=A/xA$.
Then since $B$ is a regular local ring with $\dim B=3$ and $QB=\m B$, we have
\begin{eqnarray*}
\ell_A(A/Q^{n+1}) &=& \ell_A(B/\m^{n+1}B)+\ell_A(U/Q^{n+1}U)\\
&=& \binom{n+3}{3}+\left[ \e_Q^0(U)\binom{n+1}{1}-\e_Q^1(U) \right]
\end{eqnarray*}
for all $n \gg 0$.

Because the Hilbert series $\rmH(\gr_\m(A/\a), \lambda)$ of the associated graded ring $\gr_\m(A/\a)$ is given by
$$
\rmH(\gr_\m(A/\a) , \lambda) = \frac{1+\lambda + \cdots +\lambda^{\ell-1}}{1-\lambda}
$$
and $Q{\cdot}(A/\a)=\m{\cdot}(A/\a)$, we have $\e_Q^0(U)=\e_{\m}^0(A/\a) = \ell$ and $\e_Q^1(U)=\e_\m^1(A/\a) = \binom{\ell}{2}$.
Hence
$$
(-1)^i\e_Q^i(A) = \left\{
\begin{array}{lc}
1 & \mbox{if $i = 0$},\\
0 & \mbox{if $i = 1$},\\
%\vspace{1mm}\\
\rme_Q^0(U) = \ell & \mbox{if $i = 2$},\\
%\vspace{1mm}\\
-\rme_Q^1(U) = -\binom{\ell}{2} & \mbox{if $i = 3$}.
\end{array}
\right.
$$
Therefore $ \g_s(Q;A) =\ell_A(A/Q)-\e_Q^0(A)+\e_Q^1(A)=1$ because $\ell_A(A/Q)=2$.

On the other hand, since  $A/\a$ is a Gorenstein ring and $\rmH_\fkm^1(A) \cong \rmH_\m^1(A/\a)$, we get 
$$\hdeg_Q(A_1) = \hdeg_Q(A/\a)=\e_Q^0(A/\a)=\e_{\m}^0(A/\a) = \ell.$$
We also have, for $i=0,2$, $\hdeg_Q(A_i)=0$ since $\H_{\m}^i(A)=0$.
Therefore
$$ \hdeg_Q(A)=\e_Q^0(A)+\sum_{j=0}^{2}\binom{2}{j}\hdeg_Q(M_j)=\e_Q^0(A)+2\hdeg_Q(A_1)=1+2\ell, $$
$$ \rmT_Q^1(A)=\sum_{j=1}^2\binom{2}{j-1}\hdeg_Q(A_j)=\hdeg_Q(A_1)=\ell, \ \ \mbox{and} \ \ \rmT_Q^2(A)=\hdeg_Q(A_1)=\ell$$
as required.
\end{proof}

%%%%%%%%%%%%%%%%%%%%%%%%%%%%%%%%%%%%%%%%%%%%%%%%%%%%%%%%%%%%
%\addcontentsline{toc}{section}{references}


\begin{thebibliography}{GhGHOPV2}


\bibitem[GHV]{GHV} S. Goto, J.-Y. Hong and W. V. Vasconcelos, {\it The homology of parameter ideals}, J. Algebra {\bf 368} (2012), 271--299.


\bibitem[GhGHOPV1]{GhGHOPV1} L. Ghezzi, S. Goto, J. Hong, K. Ozeki, T. T. Phuong, and W. V. Vasconcelos, {\it  Cohen-Macaulayness versus the vanishing of the first Hilbert coefficient of parameter ideals}, J. London Math. Soc. (2), 81 (2010), 679-695.



\bibitem[GhGHOPV2]{GhGHOPV2} L. Ghezzi, S. Goto, J. Hong, K. Ozeki, T. T. Phuong, and W. V. Vasconcelos, {\it The Chern and Euler coefficients of modules}, arXiv:1109.5628 (preprint).



\bibitem[GNa]{GNa} S. Goto and Y. Nakamura, {\it Multiplicities and tight closures of parameters}, J. Algebra {\bf 244} (2001), 302--311.



\bibitem[GO1]{GO1} S. Goto and K. Ozeki, {\it Sectional genera of parameter ideals}, Preprint (2014).

\bibitem[GO2]{GO2} S. Goto and K. Ozeki, {\it The first Euler characteristics versus the homological degrees}, arXiv:1404.2455 [math.AC].

\bibitem[GO3]{GO3} S. Goto and K. Ozeki, \textit{Uniform bounds for Hilbert coefficients of parameters}, Contemporary Mathematics {\bf 555} (2010), 97--118.



\bibitem[H]{H} C. Huneke, \textit{On the symmetric and Rees algebra of an ideal generated by a d-sequence}, J. Algebra {\bf 62} (1980), 268--275.





\bibitem[MSV]{MSV} M. Mandal, B. Singh, and J. Verma, {\it On some conjectures about the Chern numbers of filtrations}, J. Algebra {\bf 325} (2011), 147--162.




\bibitem[STC]{STC} P. Schenzel, N. V. Trung and N. T. Cuong, {\it Verallgemeinerte Cohen-Macaulay-Moduln}, Math. Nachr. {\bf 85} (1978), 57--73.



\bibitem[SV]{SV} J. St\"uckrad, W. Vogel, Buchsbaum Rings and Applications, Springer-Verlag, Berlin, Heidelberg, New York, 1986.


\bibitem[V1]{V1} W. V. Vasconcelos, \textit{The homological degree of a module}, Trans. Amer. Math. Soc. {\bf 350} (1998), 1167--1179. 

\bibitem[V2]{V2} W. V. Vasconcelos, \textit{Cohomological degrees of graded modules} in "Six lectures on Commutative Algebra", Progress in Mathematics {\bf 166}, 345--392, Birkh\"auser Verlag,  Basel $\cdot$ Boston $\cdot$ Berlin. 






\end{thebibliography}
\end{document}